\newcommand{\pictures}{}
\def\appendix#1{
\addtocounter{section}{1} \setcounter{equation}{0}
\renewcommand{\thesection}{\Alph{section}}
\section*{Appendix \thesection\protect\indent\quad
#1}
}
     \definecolor{linkred}{rgb}{0.6,0,0}
     \definecolor{linkblue}{rgb}{0,0,0.6}
\theoremstyle{plain}
     \newtheorem{theorem}{Theorem}
     \newtheorem{proposition}{Proposition}[section]
     \newtheorem{corollary}[proposition]{Corollary}
\theoremstyle{definition}
     \newtheorem{example}[proposition]{Example}
     \newtheorem{definition}[proposition]{Definition}
     \newtheorem{remark}[proposition]{Remark}
\newcommand{\res}{\mathop{\mathrm{Res}}}
\newcommand{\pp}{\boldsymbol{p}}
\newcommand{\bc}{\mathbb{C}}
\newcommand{\bn}{\mathbb{N}}
\newcommand{\bp}{\mathbb{P}}
\newcommand{\br}{\mathbb{R}}
\newcommand{\bz}{\mathbb{Z}}
\newcommand{\ch}{\mathcal{H}}
\newcommand{\cl}{\mathcal{L}}
\newcommand{\modm}{\mathcal{M}}
\newcommand{\cp}{\mathcal{P}}
\newcommand {\dd}{\mathrm{d}}
\newcommand {\zz}{\bm{z}}
\newcommand{\un}{1\!\!1}
\def\res{\mathop{{\rm Res}}}
\newcommand{\tcr}{\textcolor{red}}
\newcommand{\tcb}{\textcolor{blue}}
\def\tr{{\mathrm{tr\,}}}
\newcommand*{\Cdot}{\raisebox{-0.5ex}{\scalebox{1.8}{$\cdot$}}} 
\numberwithin{equation}{section}
\begin{document}

\title{Topological recursion with hard edges}
\author{Leonid Chekhov \and Paul Norbury}
\address{Steklov Mathematical Institute and Lab. Poncelet, Moscow, Russia; Niels Bohr Institute, Copenhagen, Denmark}
\address{School of Mathematics and Statistics, University of Melbourne, VIC 3010 Australia}
\email{\href{mailto:chekhov@mi.ras.ru}{chekhov@mi.ras.ru}, \href{mailto:pnorbury@ms.unimelb.edu.au}{pnorbury@ms.unimelb.edu.au}}
\thanks{}
\subjclass[2010]{14N10; 05A15; 32G15}
\date{\today}

\begin{abstract}
We prove a Givental type decomposition for partition functions that arise out of topological recursion applied to spectral curves.  Copies of the Konstevich-Witten KdV tau function arise out of regular spectral curves and copies of the Brezin-Gross-Witten KdV tau function arise out of irregular spectral curves.  We present the example of this decomposition for the matrix model with two hard edges and spectral curve $(x^2-4)y^2=1$.
\end{abstract}

\maketitle

\setlength{\parskip}{0pt}
\tableofcontents
\setlength{\parskip}{6pt}

\section{Introduction}  \label{sec:intro}

The partition functions for Gromov-Witten invariants of $\bp^1$, the Gaussian Hermitian matrix model, the Legendre ensemble, and enumeration of dessins d'enfant, which are formal series in an infinite sequence of variables $\{\hbar,v^{k,\alpha}\mid k\in\bn,\alpha\in\{1,2\}\}$, have in common a decomposition given by a differential operator $\hat{R}$ acting on the product of two species of the Kontsevich-Witten KdV tau function $Z^{\text{KW}}$ or of the Brezin-Gross-Witten KdV tau function $Z^{\text{BGW}}$---defined in Section~\ref{sec:BGW}.
\begin{align}  \label{examples}
Z^{\text{GW}}(\hbar,\{v^{k,\alpha}\})
&=\hat{R}\cdot \hat{T}_1\cdot Z^{\text{KW}}(2\hbar,\{\sqrt{2}v^{k,1}\})Z^{\text{KW}}(-2\hbar,\{i\sqrt{2}v^{k,2}\}) \nonumber\\
Z^{\text{GUE}}(\hbar,\{v^{k,\alpha}\})
&=\hat{R}\cdot \hat{T}_2\cdot Z^{\text{KW}}(2\hbar,\{\sqrt{2}v^{k,1}\})Z^{\text{KW}}(-2\hbar,\{i\sqrt{2}v^{k,2}\})\\
Z^{\text{Leg}}(\hbar,\{v^{k,\alpha}\})
&=\hat{R}\cdot \hat{T}_3\cdot Z^{\text{BGW}}(2\hbar,\{\sqrt{2}v^{k,1}\})Z^{\text{BGW}}(-2\hbar,\{i\sqrt{2}v^{k,2}\}) \nonumber\\
Z^{\text{Des}}(\hbar,\{v^{k,\alpha}\})
&=\hat{R}\cdot \hat{T}_4\cdot Z^{\text{BGW}}(-\frac12\hbar,\{\frac{i}{\sqrt{2}}v^{k,1}\})Z^{\text{KW}}(32\hbar,\{4\sqrt{2}v^{k,2}\}) \nonumber.
\end{align}
The operator $\hat{R}$, which is the exponential of a quadratic differential operator, is common to all four models, whereas $\hat{T}_i$ are operators of translations $v^{k,\alpha}\mapsto v^{k,\alpha}+c^{k,\alpha}_i$ described in Section~\ref{sec:trans}.
The partition function $Z^{\text{GW}}$ stores ancestor Gromov-Witten invariants of $\bp^1$.  Its decomposition in \eqref{examples} is a particular case of Givental's decomposition of partition functions of Gromov-Witten invariants of targets $X$ with semi-simple quantum cohomology \cite{GivGro} which applies more generally to partition functions of semi-simple cohomological field theories.  It is usually expressed as a function of variables corresponding to cohomology classes in $H^*(\bp^1)$ denoted $\{t^{k,\beta}\}$ which are related to the variables in the decomposition by $v^{k,1}=\frac{1}{\sqrt{2}}(t^{k,1}+t^{k,2})$, $v^{k,2}=\frac{i}{\sqrt{2}}(-t^{k,1}+t^{k,2})$.  The partition functions $Z^{\text{GUE}}$ and $Z^{\text{Leg}}$ store moments of the probability measure $\int_{H_N}\exp{(NV(M))}DM$ as asymptotic expansions in $\hbar=1/N^2$ for $N\to\infty$  where $H_N$ consists of $N\times N$ Hermitian matrices.  For $Z^{\text{GUE}}$, we use $V(M)=-\tr(M^2)$.  For $Z^{\text{Des}}$ we use $V(M)=-\tr M\cdot\chi_{[0,\infty)}(M)$ where the indicator function is defined by $\chi_{[0,\infty)}(M):=\prod_{i=1}^N\chi_{[0,\infty)}(\lambda_i)$ and for $A\subset\br$, $\chi_A(x)=1$ 
for $x\in A$ and 0 otherwise.  This produces a potential with infinite wall, or {\em hard edge}, at zero eigenvalue.  For $Z^{\text{Leg}}$ we use $V(M)=\chi_{[-2,2]}(M)$ (again $\chi_{[-2,2]}(M):=\prod_{i=1}^N\chi_{[-2,2]}(\lambda_i)$) i.e. one restricts eigenvalues to lie in the interval $[-2,2]$ and sets $V(M)=0$.  This produces a potential with infinite walls, or hard edges, at eigenvalues $-2$ and $2$.  Note that for the integration over the compact domain $[-2,2]^N$ we do not need a Gaussian term for convergence.   The decomposition of $Z^{\text{GUE}}$ is a decomposition of the partition function for a Hermitian matrix model with Gaussian potential  proven by the first author in \cite{Ch95} and in fact gives an example of Givental's decomposition via an associated cohomological field theory \cite{ACNP}.  The decomposition of $Z^{\text{Leg}}$ into two copies of $Z^{\text{BGW}}$ is described in detail in this paper as a particular example of the more general result involving copies of both $Z^{\text{KW}}$ and $Z^{\text{BGW}}$.  An example of mixed $Z^{\text{BGW}}$ and $Z^{\text{KW}}$ factors is given by $Z^{\text{Des}}$ the partition function for enumeration of dessins d'enfant \cite{DNoTop}.  The conclusion is that the partition functions $Z^{\text{KW}}$ and $Z^{\text{BGW}}$ are fundamental to a large class of partition functions arising from many areas.

For the two choices of $V(M)$ above, the limit
$$y(x)=\lim_{N\to\infty}N^{-1}\int_{H_N}\left\langle\tr\frac{1}{x-M}\right\rangle\exp{(NV(M))}DM
$$
is a holomorphic function near $x=\infty$ which analytically continues to define a Riemann surface, known as a spectral curve, given as a double cover of the $x$-plane $x=z+1/z$ on which $y(x)dx$ extends to a well-defined differential $r(z)dz$ for $r(z)$ a rational function.  One can also associate a Riemann surface to Gromov-Witten invariants of
$\bp^1$ via an associated Landau-Ginzburg model.  Each of the examples in \eqref{examples} can be formulated in terms of a recursive construction of meromorphic differentials defined on the associated Riemann surface known as {\em topological recursion}.  In this paper we prove a decomposition theorem for partition functions arising out of topological recursion that generalises the examples in \eqref{examples}.

Topological recursion developed by Eynard, Orantin and the first author \cite{CEyHer,CEO,EOrInv} produces invariants $\omega_{g,n}$ for integers $g\geq 0$ and $n\geq 1$, which we will refer to as {\em correlators}, of a Riemann surface $\Sigma$ equipped with two meromorphic functions $x, y: \Sigma\to \mathbb{C}$ and a bidifferential $B(p_1,p_2)$ for $p_1, p_2 \in \Sigma$.  The zeros $\cp_\alpha$ of $dx$ must be simple and disjoint from the zeros of $dy$.     We refer to the data $S=(\Sigma,B,x,y)$ as a {\em spectral curve}.   We allow $\Sigma$ to be (a possibly disconnected) open subset of a compact Riemann surface,  in which case $S$ is known as a {\em local spectral curve}.   For integers $g \geq 0$ and $n \geq 1$, the correlator $\omega_{g,n}$ is a multidifferential on $\Sigma$ or, in other words, a tensor product of meromorphic differentials on $\Sigma^n$.  It is defined recursively via
$$\omega_{0,1}(p)=-y(p)dx(p),\quad \omega_{0,2}(p_1,p_2)=B(p_1,p_2)
$$
which are used to define the kernel in a neighbourhood of $p_2=\cp_\alpha$ for $dx(\cp_\alpha)=0$
$$ K(p_1,p_2)=\frac{1}{2}\frac{\int_{\sigma_\alpha(p_2)}^{p_2}B(p,p_1)}{(y(p_2)-y(\sigma_\alpha(p_2)))dx(p_2)}.
$$
The point $\sigma_\alpha(p)\in \Sigma$ is defined to be the unique point $\sigma_\alpha(p)\neq p$ close to $\alpha$ such that $x(\sigma_\alpha(p))=x(p)$ which is well-defined since each zero $\cp_\alpha$ of $dx$ is assumed to be simple.  For $L=\{2,...,n\}$ define
\begin{equation}  \label{TRrec}
\omega_{g,n}(p_1,\pp_{L})=\sum_{\alpha=1}^D\res_{p=\cp_\alpha}K(p_1,p) \bigg[\omega_{g-1,n+1}(p,\sigma_\alpha(p),\pp_{L})+ \mathop{\sum_{g_1+g_2=g}}_{I\sqcup J=L}^\circ \omega_{g_1,|I|+1}(p,\pp_I) \, \omega_{g_2,|J|+1}(\sigma_\alpha(p),\pp_J) \bigg]
\end{equation}
where the outer summation is over the zeros $\cp_\alpha$ of $dx$ and the $\circ$ over the inner summation means that we exclude terms that involve $\omega_1^0$.    The recursive definition of $\omega_{g,n}(p_1, \ldots, p_n)$ uses only local information around zeros of $\dd x$ so a local spectral curve containing the zeros of $dx$ is sufficient.  A zero of $dx$ is {\em regular} if $y$ is analytic there.   A spectral curve is regular if $y$ is analytic at all zeros of $dx$.  In this paper we consider {\em irregular} spectral curves where $y$ may have a simple pole at any zero of $dx$.  The correlators $\omega_{g,n}$ are polynomial in a basis of differentials $v^{k,m}=V^{k,m}(p_i)$ on $\Sigma$ depending only $x$ and $B$---defined in \eqref{Vdiff} in Section~\ref{sec:partfun}.  Define the topological recursion partition function of the spectral curve $S=(\Sigma,B,x,y)$ by
$$Z^S(\hbar,\{v^{k,i}\})=\exp\left(\sum_{g,n}\frac{\hbar^{g-1}}{n!}\omega_{g,n}(\{v^{k,i}\})\right).
$$

The topological recursion partition function of the curve $x=\frac{1}{2}y^2$ (equipped with the Cauchy kernel $B=dy_1dy_2/(y_1-y_2)^2$) is the Kontsevich-Witten KdV tau function $Z=Z^{\text{KW}}$ and we write $\omega^{\text{Airy}}_{g,n}$ for the correlators of this curve known as the Airy curve due to its relation to the differential equation satisfied by the Airy function.  Similarly, the topological recursion partition function of the curve $xy^2=\frac{1}{2}$ yields the Brezin-Gross-Witten KdV tau function $Z=Z^{\text{BGW}}$, defined in Section~\ref{sec:BGW}, and we write $\omega^{\text{Bes}}_{g,n}$ due to a relation of the curve with the Bessel equation \cite{DNoTop1}. The BGW model was first identified with the KdV $\tau$-function in \cite{MMS}.

For a general spectral curve $S$, it is straightforward to prove that the asymptotic behaviour, or largest order principal part, of $\omega_{g,n}$ near each zero $\cp_\alpha$ of $dx$ is given by the correlators for the local model of the curve $x=\frac{1}{2}y^2$ and $xy^2=\frac{1}{2}$, i.e.  $\omega_{g,n}\sim c^{2g-2+n}\omega^{\text{Airy}}_{g,n}$ for some constant $c\in\bc$ near any regular zero of $dx$ and $\omega_{g,n}\sim c^{2g-2+n}\omega^{\text{Bes}}_{g,n}$ near any irregular zero of $dx$ where $y$ has a simple pole.  What is much deeper is that $\omega_{g,n}$ can be constructed completely from copies of $\omega^{\text{Airy}}_{g,n}$ and $\omega^{\text{Bes}}_{g,n}$.  This is described in terms of the partition functions in Theorem~\ref{th:main} below.

In \cite{DOSSIde} Dunin-Barkowski, Orantin, Shadrin and Spitz proved that the partition function $Z^S$ of a regular spectral curve satisfying a finiteness assumption possesses a decomposition in terms of products of $Z^{\text{KW}}$ acted on by differential operators built out of spectral curve data.  Furthermore, they showed that this decomposition coincides with a decomposition of Givental \cite{GivGro} for partition functions $Z$ arising out of semi-simple cohomological field theories.  An immediate consequence is that, under some assumptions on the spectral curve, topological recursion produces partition functions $Z$ for semi-simple cohomological field theories.

The results of \cite{DOSSIde} require the spectral curve to have regular singularities, i.e. $dy$ must be analytic at the zeros of $dx$.  The main result of this paper is a generalisation of the decomposition theorem to allow irregular singularities.
\begin{theorem}  \label{th:main}
Consider a spectral curve $S=(\Sigma,B,x,y)$ with $m$ irregular zeros of $dx$ at which $y$ has simple poles, and $D-m$ regular zeros.  There exist operators $\hat{R}$, $\hat{T}$ and $\hat{\Delta}$ defined in Definition~\ref{ops} determined explicitly by $(\Sigma,B,x,y)$ such that
\begin{equation}  \label{eq:main}
Z^S=\hat{R}\hat{T}\hat{\Delta}Z^{\text{BGW}}(\hbar,\{v^{k,1}\})\cdots Z^{\text{BGW}}(\hbar,\{v^{k,m}\})Z^{\text{KW}}(\hbar,\{v^{k,m+1}\})\cdots Z^{\text{KW}}(\hbar,\{v^{k,D}\}).
\end{equation}
Moreover, $\hat{R}$ depends only on $(\Sigma,B,x)$, $\hat{T}$ is a translation operator, and $\hat{\Delta}$ acts by rescaling $\hbar$ and $v^{k,i}$.
\end{theorem}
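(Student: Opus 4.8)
The plan is to mimic the strategy of Dunin-Barkowski--Orantin--Shadrin--Spitz \cite{DOSSIde} but to allow the local model at some zeros of $dx$ to be the Bessel curve $xy^2=\tfrac12$ rather than the Airy curve $x=\tfrac12 y^2$. The starting point is the observation already recorded in the excerpt: near each zero $\cp_\alpha$ of $dx$ the correlators $\omega_{g,n}$ are governed, to leading principal-part order, by $\omega^{\text{Airy}}_{g,n}$ at a regular zero and by $\omega^{\text{Bes}}_{g,n}$ at an irregular zero. So the first step is to set up, for the given spectral curve $S$, a ``disconnected local spectral curve'' $S_0$ consisting of $m$ copies of the Bessel curve and $D-m$ copies of the Airy curve, each rescaled by a constant $c_\alpha$ read off from the local behaviour of $x$ and $y$ at $\cp_\alpha$; the rescalings are exactly what $\hat\Delta$ encodes, acting diagonally on $\hbar$ and on the $v^{k,\alpha}$ of each factor. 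The partition function of $S_0$ is then literally the product $\prod_{i\le m} Z^{\text{BGW}}\prod_{i>m} Z^{\text{KW}}$ after the $\hat\Delta$-rescaling, and one must check that the $\omega^{\text{Bes}}_{g,n}$, expressed in the basis of differentials $V^{k,m}$ from \eqref{Vdiff}, do reproduce $Z^{\text{BGW}}$ with the normalisations claimed (this is where the definition of $Z^{\text{BGW}}$ in Section~\ref{sec:BGW} and the identification of \cite{MMS} get used).

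Next I would construct the operators $\hat R$ and $\hat T$ from the spectral curve data. The operator $\hat R$ should come from the transition matrix between the global basis of differentials on $\Sigma$ and the local bases at the zeros $\cp_\alpha$: concretely, expand $B(p_1,p_2)$, $dx$ and the chosen primitives near each $\cp_\alpha$, and let $R(z)$ be the resulting (formal) change-of-frame series; then $\hat R=\exp$ of the associated quadratic differential operator in the $v^{k,\alpha}$, built the same way as in \cite{DOSSIde}. Crucially $\hat R$ depends only on $(\Sigma,B,x)$ and not on $y$, because the kernel $K$ in \eqref{TRrec} has its $y$-dependence confined to the denominator $(y(p_2)-y(\sigma_\alpha(p_2)))dx(p_2)$, whose local expansion feeds into $\hat\Delta$ and $\hat T$ rather than $\hat R$; I would isolate this dependence carefully. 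The translation operator $\hat T$ then absorbs the difference between $\omega_{0,1}=-y\,dx$ of $S$ and the sum of the local $\omega_{0,1}$'s of $S_0$ after applying $\hat R\hat\Delta$ — i.e. $\hat T$ shifts $v^{k,\alpha}\mapsto v^{k,\alpha}+c^{k,\alpha}$ where $c^{k,\alpha}$ are the coefficients of the ``extra'' part of $y\,dx$ not captured by the local models.

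The heart of the proof is then to show that $\hat R\hat T\hat\Delta$ applied to $Z^{S_0}$ reproduces $Z^S$. The cleanest route is to verify that the right-hand side of \eqref{eq:main} satisfies the same defining recursion \eqref{TRrec} as $Z^S$, together with the same initial data $\omega_{0,1}$, $\omega_{0,2}$; since topological recursion determines all $\omega_{g,n}$ uniquely from these, this forces equality. Concretely one shows: (i) the action of $\hat\Delta$, $\hat R$, $\hat T$ on the product of KW and BGW partition functions produces multidifferentials with poles only at the $\cp_\alpha$, of the correct orders, and with no residues forbidden by \eqref{TRrec}; (ii) the loop-equation / abstract-quantization identities satisfied by $Z^{\text{KW}}$ and $Z^{\text{BGW}}$ individually (the Airy and Bessel Virasoro constraints) are transported by conjugation with $\hat R\hat T\hat\Delta$ into precisely the linear and quadratic loop equations on $\Sigma$ that are equivalent to \eqref{TRrec}. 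The $\hat R$-conjugation of quadratic constraints is the standard Givental computation; the genuinely new ingredient is checking that the Bessel/BGW constraints — which differ from the Airy ones by the $g=0$, $n=1$ ``dilaton-type'' shift responsible for the simple pole of $y$ — survive this conjugation and glue correctly with the Airy factors. I expect this compatibility at the irregular zeros to be the main obstacle: one must control the interaction of the $1/z$ singularity in the local $y$ with the $R$-matrix expansion, ensuring that the potentially dangerous negative-degree terms either cancel or are exactly accounted for by $\hat T$ and $\hat\Delta$, and that the residue extraction in \eqref{TRrec} at an irregular $\cp_\alpha$ matches the Bessel-curve residue computation term by term.
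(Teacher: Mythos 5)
Your plan is a genuinely different route from the paper's, and it stops short of the proof at exactly the point you flag as the main obstacle. The paper does not argue by showing that the right-hand side of \eqref{eq:main} satisfies the loop equations / recursion \eqref{TRrec} and invoking uniqueness. Instead it proves a graphical expansion of the correlators themselves (Theorem~\ref{graphexp}): $\omega_{g,n}$ is a weighted sum over decorated graphs with vertex weights given by the coefficients $a_{g,n}$ or $b_{g,n}$ of $\omega^{\text{Airy}}_{g,n}$ or $\omega^{\text{Bes}}_{g,n}$, edge weights given by the Taylor coefficients $B^{\alpha,\beta}_{2k,2\ell}$ of $B$ at pairs of zeros of $dx$, ordinary-leaf weights $V^\alpha_k(p)$, and dilaton-leaf weights from the odd local coefficients of $y$. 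This sum is then read off as the Feynman expansion of $\hat{R}\hat{T}\hat{\Delta}$ acting on the product of tau functions, with a Campbell--Baker--Hausdorff computation (from \cite{DSSGiv}) matching the quadratic part of $\hat{R}$ to the edge weights. So the identification is term-by-term combinatorial, not a uniqueness argument; your Virasoro-conjugation strategy is closer in spirit to the Airy-structure approach of \cite{KSoAir,ABCO} and would require separately establishing that the transported constraints are equivalent to \eqref{TRrec} for the global curve, which is itself nontrivial for the BGW factors.

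The concrete gap is that the genuinely new ingredient of the irregular case is never supplied. In the paper this is Proposition~\ref{th:defbes}: for a deformation $y=\sum_{k\ge -1}y_k z^k$ of the Bessel curve, the correlators are $y_{-1}^{2-2g-n}$ times a finite sum in which each insertion of $y_{\alpha}/y_{-1}$ multiplies a coefficient $b_{g,n+m}$ of the undeformed Bessel correlators --- i.e.\ the dependence on $y$ is exactly a translation and rescaling of $Z^{\text{BGW}}$. This is proven by expanding the kernel $K$ in powers of $y_{2k-1}/y_{-1}$ and applying the variational formula $\frac{\partial}{\partial y_k}\omega_{g,n}=\res_{z_0=\infty}\frac{1}{k+2}z_0^{k+2}\omega_{g,n+1}$, not by manipulating constraints. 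Without this proposition (or an equivalent statement about how the $1/z$ pole of $y$ interacts with the deformation), your step ``the potentially dangerous negative-degree terms either cancel or are exactly accounted for by $\hat T$ and $\hat\Delta$'' is an assertion of the theorem's hardest content rather than a proof of it. Likewise, the independence of $\hat R$ from $y$ is not automatic from inspecting the kernel $K$; in the paper it follows because the edge weights in Theorem~\ref{graphexp} involve only $B$ and $x$, and the CBH identity converts precisely those weights into $\hat R$.
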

\begin{remark} Theorem~\ref{th:main} generalises the result of \cite{DOSSIde} even when applied to a regular spectral curve since it contains a translation term $\hat{T}$ that does not appear in the decomposition theorem of \cite{DOSSIde}.  More precisely, \cite{DOSSIde} considers a restricted class of spectral curves on which the function $y$ is determined by $(\Sigma,B,x)$ together with the $m$ numbers $dy(\cp_\alpha)\in\bc$ and corresponds to cohomological field theories with flat identity.  In particular \cite{DOSSIde} does not apply to Weil-Petersson volumes studied by Mirzakhani \cite{MirWei} which gives a fundamental example of a cohomological field theory without flat identity.  This is despite the proof in \cite{EynRec,EOrWei} that Weil-Petersson volumes do indeed arise out of topological recursion applied to a spectral curve.  Theorem~\ref{th:main} remedies this situation.  The translation term in Theorem~\ref{th:main} (for regular spectral curves) corresponds to a translation of cohomological field theories---see \cite{PPZRel}.
\end{remark}
\begin{remark}
In the examples \eqref{examples} the different operators $\hat{\Delta}$ are visible via the differential rescalings of $\hbar$.  The operator $\hat{R}$ is built out of the data $(\Sigma,B,x)$---see Section~\ref{sec:decomp}.  It is the same for all four examples of \eqref{examples} reflecting the fact that the associated spectral curves differ only in the definition of $y$, i.e. $(\Sigma,B,x)$ is the same in these four cases.
\end{remark}
\begin{remark}
The partition functions arising from Gromov-Witten invariants, cohomological field theories, matrix models and topological recursion possess two sets of natural coordinates---{\em flat} coordinates and {\em canonical} coordinates.  Theorem~
\ref{th:main} is expressed with respect to canonical coordinates.  The change of coordinates between canonical and flat is given by an $m\times m$ matrix, i.e. it is linear and independent of the first parameter $d$.  This change of coordinates appears in Givental's decomposition as a further operator 
acting on the left of \eqref{eq:main}.
\end{remark}

The proof of the formula \eqref{eq:main} is built up progressively via special cases proven throughout the text.  The reader may find some of these simpler versions more digestable.  The basic cases of $(\hat{R},\hat{T},\hat{\Delta})=(Id,Id,Id)$ and $D=1$ appear in \eqref{airypart} and \eqref{bespart}.  The case of $(\hat{R},\hat{T},\hat{\Delta})=(Id,Id,\hat{\Delta})$ which gives rise to a topological field theory appears in \eqref{ztop}.  Translations can be best understood via the case $(\hat{R},\hat{T},\hat{\Delta})=(Id,\hat{T},\hat{\Delta})$ and $D=1$ given in \eqref{eq:trans}.

In Section~\ref{sec:BGW} we recall the definitions of the two KdV tau functions which form the fundamental pieces of the decomposition.   In Section~\ref{sec:partfun} we introduce the decomposition without the differential operator $\hat{R}$ via the elementary topological part of the correlators.  In Section~\ref{sec:decomp} we prove the decomposition \eqref{eq:main}.  We apply the decomposition to the example of the Legendre ensemble in Section~\ref{sec:legendre} and demonstrate an application of the decomposition \eqref{eq:main} pictorially.  In this paper we use the convention $\bn=\{0,1,2,...\}$.

\subsection{Acknowledgements}
The authors would like to thank Maxim Kazarian, Nicolas Orantin, Sergey Shad\-rin and Peter Zograf for useful conversations and the referee for many useful comments.  PN was partially supported by the Australian Research Council grant DP170102028. The work of L.Ch. was partially supported by the Russian Foundation for Basic Research (Grant No. 17-01-00477) and by the ERC Advanced Grant 291092 ''Exploring the Quantum Universe'' (EQU).

\section{Kontsevich-Witten and Brezin-Gross-Witten tau functions}   \label{sec:BGW}

The fundamental components $Z^{\text{KW}}(\hbar,t_0,t_1,...)$ and $Z^{\text{BGW}}(\hbar,t_0,t_1,...)$ of the factorisation \eqref{eq:main} are tau functions of the KdV hierarchy.  The Kontsevich-Witten tau function $Z^{\text{KW}}$ was introduced in \cite{WitTwo}, and the Brezin-Gross-Witten tau function $Z^{\text{BGW}}$ arises out of a unitary matrix mode studied in \cite{BGrExt,GWiPos}.  A tau function $Z(t_0,t_1,...)$ of the KdV hierarchy (equivalently the KP hierarchy in odd times $p_{2k+1}=t_k/(2k+1)!!$) gives rise to a solution of the KdV hierarchy via $Z=\exp{F}$, $U=\hbar\frac{\partial^2 F}{\partial t_0^2}$
\begin{equation}\label{kdv}
U_{t_1}=UU_{t_0}+\frac{\hbar}{12}U_{t_0t_0t_0},\quad U(t_0,0,0,...)=f(t_0).
\end{equation}
The first equation in the hierarchy is the KdV equation \eqref{kdv}, and later equations $U_{t_k}=P_k(U,U_{t_0},U_{t_0t_0},...)$ for $k>1$ determine $U$ uniquely from $U(t_0,0,0,...)$.

The Kontsevich-Witten tau function $Z^{\text{KW}}$ is defined by $U(t_0,0,0,...)=t_0$ (and is in fact determined uniquely by \eqref{kdv} and the string equation $F_{t_0}=\frac{1}{2}t_0^2+\sum t_{i+1}F_{t_i}$ i.e. the higher equations giving $U_{t_k}$ for $k>1$ are automatically satisfied).  It is famously a generating function for intersection numbers over the moduli space of stable curves equipped with tautological line bundles $L_i\to\overline{\modm}_{g,n}$, $i=1,...,n$ and $\psi_i=c_1(L_i)$.

\begin{theorem}[Witten-Kontsevich 1992 \cite{KonInt,WitTwo}]
$$F^{\text{KW}}(\hbar,t_0,t_1,...)=\sum_{g,n}\hbar^{g-1}\frac{1}{n!}\sum_{\vec{k}\in\bn^n}\int_{\overline{\modm}_{g,n}}\prod_{i=1}^n\psi_i^{k_i}t_{k_i}
$$
\end{theorem}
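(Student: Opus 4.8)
The statement identifies the abstractly defined tau function $F^{\text{KW}}$ --- characterised, as recalled above, by the KdV hierarchy \eqref{kdv} together with the string equation and the initial condition $U(t_0,0,0,\ldots)=t_0$ --- with the generating function of $\psi$-class intersection numbers, which I will denote $F^{\text{int}}$. Since $F^{\text{KW}}$ is \emph{uniquely} determined by these defining properties, the plan is to verify that $F^{\text{int}}$ satisfies all of them; equality then follows from uniqueness. This splits the work into an elementary geometric part (string equation and initial condition) and the deep part (that $F^{\text{int}}$ lies on the KdV orbit).

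The elementary part is pure intersection theory on $\overline{\modm}_{g,n}$. The string equation comes from the forgetful morphism $\pi\colon\overline{\modm}_{g,n+1}\to\overline{\modm}_{g,n}$ and the comparison of psi-classes $\psi_i=\pi^*\psi_i+[D_i]$, where $D_i$ is the image of the $i$-th section; pushing $\prod_i\psi_i^{k_i}$ forward along $\pi$ and using the standard self-intersection relations for $[D_i]$ yields exactly $\partial_{t_0}F^{\text{int}}=\tfrac12 t_0^2+\sum_i t_{i+1}\partial_{t_i}F^{\text{int}}$. For the initial condition, setting $t_{\ge1}=0$ in $U=\hbar\,\partial_{t_0}^2F^{\text{int}}$ retains only integrals $\int_{\overline{\modm}_{g,n}}1$, which vanish unless $3g-3+n=0$; the sole survivor is $\int_{\overline{\modm}_{0,3}}1=1$, giving $F^{\text{int}}_0|_{t_{\ge1}=0}=\tfrac16\hbar^{-1}t_0^3$ and hence $U(t_0,0,0,\ldots)=t_0$.

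The deep part is that $F^{\text{int}}$ is a tau function of the KdV hierarchy. Here I would follow Kontsevich. First replace $\overline{\modm}_{g,n}$ by its combinatorial model: Strebel's theorem gives, for fixed positive perimeters $(p_1,\ldots,p_n)$, a homeomorphism of $\modm_{g,n}\times\mathbb{R}_+^n$ onto the space of metric ribbon graphs, producing an orbifold cell decomposition indexed by trivalent ribbon graphs $\Gamma$. On this model the combination $\sum_i p_i^2\psi_i$ has an explicit piecewise-linear representative, so that after a Laplace transform $p_i\mapsto\lambda_i$ each intersection number becomes a finite sum over $\Gamma$ of $|\mathrm{Aut}\,\Gamma|^{-1}$ times a product over edges of factors $2/(\lambda_i+\lambda_j)$. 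One then recognises this ribbon-graph sum as the Feynman expansion of the Kontsevich matrix integral $\int\exp\!\big(-\tfrac12\tr(\Lambda X^2)+\tfrac{i}{6}\tr X^3\big)\,dX$ over $N\times N$ Hermitian matrices $X$ with source $\Lambda=\mathrm{diag}(\lambda_1,\ldots,\lambda_N)$, the cubic vertices producing the trivalent graphs and the propagator the edge factors; under the substitution $t_k\propto\tr\Lambda^{-(2k+1)}$ this identifies $\exp F^{\text{int}}$ with the asymptotic expansion of the integral. Finally, invariance of the integral under $X\mapsto X+\varepsilon X^{n+1}$ yields Schwinger--Dyson (loop) equations that assemble into Virasoro constraints $L_nZ=0$ for $n\ge-1$, and by the Dijkgraaf--Verlinde--Verlinde / Kac--Schwarz equivalence --- a series annihilated by the full Virasoro algebra, whose lowest constraint $L_{-1}$ is the string equation, is the KW KdV tau function --- we conclude $F^{\text{int}}=F^{\text{KW}}$. (A modern alternative, better adapted to this paper, would instead verify that the correlators of $F^{\text{int}}$ satisfy Eynard--Orantin recursion on the Airy curve $x=\tfrac12y^2$, invoking the identification of that partition function with $Z^{\text{KW}}$ recalled in the Introduction.)

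The main obstacle is the combinatorial comparison step: proving Kontsevich's identity that the Laplace-transformed cell integrals reproduce $\int_{\overline{\modm}_{g,n}}\prod_i\psi_i^{k_i}$. This requires constructing the piecewise-linear $2$-form representatives of the psi-classes on the non-compact combinatorial moduli space, controlling their behaviour along the boundary where ribbon graphs degenerate, and checking both that the orbifold weights $|\mathrm{Aut}\,\Gamma|^{-1}$ match the symmetry factors of the matrix-model Feynman expansion and that the Laplace transform converges term by term. Everything downstream --- recognising the matrix model and extracting the Virasoro constraints --- is then formal.
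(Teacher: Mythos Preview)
The paper does not prove this statement: it is quoted as a classical result and attributed to Witten and Kontsevich via the citations \cite{KonInt,WitTwo}, with no argument given beyond the citation. Your sketch is therefore not being compared against a competing proof in the paper but against the original literature, and what you have outlined is precisely Kontsevich's approach in \cite{KonInt} (combinatorial model via Strebel differentials, identification with the matrix Airy integral, and Virasoro/KdV). That outline is correct in broad strokes and consistent with the sources the paper invokes; for the purposes of this paper nothing more than the citation is required.
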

Its first few terms are given by
$$F^{KW}(\hbar,t_0,t_1,...)=\hbar^{-1}(\frac{t_0^3}{3!}+\frac{t_0^3t_1}{3!}+\frac{t_0^4t_2}{4!}+...)+\frac{t_1}{24}+...
$$
Its dispersionless limit is nontrivial: $\displaystyle\lim_{\hbar\to 0}U=\frac{t_0}{1-t_1}+\frac{t_0^2t_2}{2(1-t_1)^3}+...$.

It arises via topological recursion applied to the Airy curve \cite{EOrTop}.  For
$$
S_{\text{Airy}}=\{x=\frac{1}{2}z^2,\ y=z,\ B=\frac{dzdz'}{(z-z')^2}\}
$$
and $\omega_{g,n}^{\text{Airy}}$ defined by \eqref{TRrec}, we have
$$\omega_{g,n}^{\text{Airy}}=\sum_{\vec{k}\in\bz_+^n}\int_{\overline{\modm}_{g,n}}\prod_{i=1}^n\psi_i^{k_i}(2k_i+1)!!\frac{dz_i}{z_i^{2k_i+2}}.
$$
Out of the correlators $\omega_{g,n}^{\text{Airy}}$, \eqref{eq:part} builds a partition function which in this case using $V^k(z)=(2k+1)!!\frac{dz}{z^{2k+2}}$ from Definition~\ref{auxdif} gives
\begin{equation}\label{airypart}
Z^{S_{\text{Airy}}}=Z^{\text{KW}}.
\end{equation}
This is the first case of \eqref{eq:main} where $D=1$ and $\hat{R}=Id=\hat{T}=\hat{\Delta}$.


The Brezin-Gross-Witten solution of the KdV hierarchy is defined by the initial condition
$$
U(t_0,0,0,...)=\frac{\hbar}{8(1-t_0)^2}.
$$
The first few terms of its tau function are given by
$$ \log Z^{\text{BGW}}=F^{\text{BGW}}(\hbar,t_0,t_1,...)=\frac{1}{8}t_0+\frac{1}{16}t_0^2+\frac{1}{24}t_0^3+...+\hbar(\frac{3}{128}t_1+\frac{9}{128}t_0t_1+...)+..
$$
and we see that its dispersionless limit is trivial:
$$\lim_{\hbar\to 0}U=0.$$

It arises via topological recursion applied to the Bessel curve \cite{DNoTop1}
$$
S_{\text{Bes}}=\{x=\frac{1}{2}z^2,\ y=\frac{1}{z},\ B=\frac{dzdz'}{(z-z')^2}\}
$$
as follows.  Write
$$
\omega_{g,n}^{\text{Bes}}=\sum_{\mu\in\bz_+^n}b_{g,n}(\mu_1, \ldots, \mu_n)\prod_{i=1}^n\frac{dz_i}{z_i^{\mu_i+1}}
$$
for the correlators of topological recursion applied to the curve $S_{\text{Bes}}$.  As above $V^k(z)=(2k+1)!!\frac{dz}{z^{2k+2}}$ and it is proven in \cite{DNoTop1} that
$$F_g^{S_{\text{Bes}}}=F^{\text{BGW}}_g=\sum_n\frac{1}{n!}\sum_{k\in\bn^n}\frac{b_{g,n}(2k_1+1, \ldots, 2k_n+1)}{\prod_{i=1}^n(2k_i+1)!!}t_{k_1}\dots t_{k_n}$$
hence
\begin{equation}\label{bespart}
Z^{S_{\text{Bes}}}=Z^{\text{BGW}}
\end{equation}
which is again a case of \eqref{eq:main} for $D=1$ and $\hat{R}=Id=\hat{T}=\hat{\Delta}$.


\begin{remark}
Kontsevich and Soibelman \cite{KSoAir} have studied topological recursion via an algebraic structure which they call an {\em Airy structure} referring to the fact that a regular spectral curve locally resembles the Airy curve.  The more general setup of irregular spectral curves that locally resemble the Bessel curve also determines an Airy structure, 
using the technique of abstract topological recursion \cite{ABCO}.
\end{remark}


\section{Partition function for topological recursion}  \label{sec:partfun}

In this section we define the partition function $Z^S$ built out of the correlators $\omega_{g,n}$ of the spectral curve $S$.  It is a generating function for all $\omega_{g,n}$ with the substitution of variables $v^{k,i}$ for differentials on the curve. We then give a leisurely introduction to the formula \eqref{eq:main} by considering only part of this formula, obtained by arranging $\hat{R}=Id=\hat{T}$.

Any correlator $\omega_{g,n}(p_1,...,p_n)$ has the property that its principal part in any $p_i$ at any zero $\cp_i$ of $dx$ is skew-invariant under the local involution defined by $dx$ around $\cp_i$.  Eynard \cite{EynInv} defined a collection of auxiliary differentials (defined below) on the curve which span all those meromorphic differentials with principal part at any zero $\cp$ of $dx$ skew-invariant under the local involution defined by $dx$ around $\cp$. Hence $\omega_{g,n}$ is a polynomial in these auxiliary differentials.
\begin{definition}\label{auxdif}
For a Riemann surface $\Sigma$ equipped with a meromorphic function $x:\Sigma\to\bc$ define the auxiliary differentials on $\Sigma$ as follows:
\begin{equation}  \label{Vdiff}
V^\alpha_0(p)=B(\cp_\alpha,p),\quad V^\alpha_{k+1}(p)=d\left(\frac{V^\alpha_k(p)}{dx(p)}\right),\ \alpha=1,...,D,\quad k=0,1,2,...
\end{equation}
where $B(\cp_\alpha,p)$ is evaluation at $\cp_\alpha$, a zero of $dx$ .  Evaluation of any meromorphic differential $\omega$ at a simple zero $\cp$ of $dx$ is defined by
$$
\omega(\cp):=\res_{p=\cp}\frac{\omega(p)}{\sqrt{2(x(p)-x(\cp))}}
$$
where we choose a branch of $\sqrt{x(p)-x(\cp)}$ once and for all at each $\cp$ to remove the $\pm1$ ambiguity.  
\end{definition}
The meromorphic differentials $V^\alpha_k(p)$ defined above constitute local Krichever--Whitham systems \cite{Kr} of 1-differentials skew-invariant with respect to local involutions.  In \cite{EynInv} Eynard defines $d\xi_{\alpha,k}(p)$, using local coordinates, which give the principal part of $V^\alpha_k(p)$ and serve a similar role.  

For each locally defined involution $\sigma_\alpha$ defined in a neighbourhood of a zero $\cp_\alpha$ of $dx$, we have $V^\alpha_k(p)+V^\alpha_k(\sigma_\alpha(p))$ is analytic at $\cp_\alpha$.  The $V^\alpha_k(p)$ form a basis for meromorphic differentials which have principal part skew-invariant under each involution $\sigma_\alpha$ and $\omega_{g,n}$ is a polynomial in them:
$$\omega_{g,n}(p_1,...,p_n)=\sum_{\vec{\alpha},\vec{k}}c_{g,\vec{\alpha},\vec{k}}\prod_{i=1}^nV_{k_i}^{\alpha_i}(p_i).
$$
The partition function of a spectral curve $S=(\Sigma,B,x,y)$ is defined by:
\begin{equation}  \label{eq:part}
Z^S(\hbar,\{v^{k,\alpha}\})=\exp\left(\sum_{g,n}\hbar^{g-1}\frac{1}{n!}\omega_{g,n}(p_1,...,p_n)|_{\{V_{k_i}^{\alpha_i}(p_i)=v^{k_i,\alpha_i}\}}\right).
\end{equation}

\subsection{Topological field theory and asymptotic behaviour of $\omega_{g,n}$.}

Given a spectral curve $S=(\Sigma,B,x,y)$, around any regular zero $\cp_i$ of $dx$ the pair $(\Sigma,x)$ resembles the Airy curve $x=\frac{1}{2}s^2$.  A consequence of this proven in \cite{EOrInv} is that near a branch point, the asymptotic behaviour of $\omega_{g,n}(p_1,...,p_n)$ is described by $\omega^{\text{Airy}}_{g,n}(s_1,...,s_n)$.   More precisely, consider a local variable $s$ in a neighbourhood of $a_i$ chosen so that $x=x(\cp_i)+\frac{1}{2}\epsilon^2s^2$, for $\epsilon>0$ a small real constant. With respect to this local coordinate $y=y(\cp_i)+\eta_i^{1/2}\epsilon s+...$ where
\begin{equation} \label{etareg}
\eta_i=dy(\cp_i)^2=\res_{z=\cp_i}\frac{(dy)^2}{dx}.
\end{equation}
The dominant asymptotic term as $\epsilon\rightarrow 0$ is
\begin{equation} \label{eq:asym}
\omega_{g,n}(p_1,...,p_n)=\epsilon^{6-6g-3n}dy(\cp_i)^{2-2g-n}\omega_{g,n}^{\text{Airy}}(s_1,...,s_n)+O(\epsilon^{7-6g-3n})
\end{equation}
where $s_i=s(p_i)$ is the local coordinate $s$ evaluated at the point $p_i\in\Sigma$.

The analogous behaviour near an irregular zero $\cp_i$ of $dx$ where the spectral curve resembles the Bessel curve $xy^2=\frac{1}{2}$ also holds.  With respect to the local variable $s$ defined above 
we now have $y=\eta_i^{1/2}s^{-1}+...$ for
\begin{equation} \label{etairreg}
\eta_i=(ydx)(\cp_i)^2=\res_{z=\cp_i}y^2dx.
\end{equation}
The dominant asymptotic term as $\epsilon\rightarrow 0$ is 
\begin{equation} \label{eq:asymirr}
\omega_{g,n}(p_1,...,p_n)=\epsilon^{2-2g-n}(ydx)(\cp_i)^{2-2g-n}\omega_{g,n}^{\text{Bessel}}(s_1,...,s_n)+O(\epsilon^{3-2g-n}).
\end{equation}

Collect the top order pole parts of $\omega_{g,n}(p_1,...,p_n)$ at each $\cp_i$, which have order $6g-6+4n$ by \eqref{eq:asym}, respectively $2g-2+2n$ by \eqref{eq:asymirr}, into a single correlator $\omega_{g,n}^{\text{top}}(p_1,...,p_n)$.  Alternatively define $\omega_{g,n}^{\text{top}}(p_1,...,p_n)$ using a local spectral curve $S_0$ built out of $S$ as follows.  Given a spectral curve $S=(\Sigma,B,x,y)$, define the local spectral curve $S_0=(\Sigma,B_0,x,y)$ with $B_0$ the trivial Bergman kernel---it is given in a local coordinate $s$ around any zero $\cp_i$ of $dx$ defined by $x=x(\cp_i)+\frac{1}{2}s^2$ by $B_0(p,p')=\frac{ds(p)ds(p')}{(s(p)-s(p'))^2}$.  The correlators $\omega_{g,n}^{\text{top}}(p_1,...,p_n)$ of $S_0$ consist of the top order pole parts of $\omega_{g,n}(p_1,...,p_n)$ at each zero of $dx$.

In preparation for studying the full formula \eqref{eq:main}, we restate \eqref{eq:asym} and \eqref{eq:asymirr} by \eqref{ztop} below in terms of the partition functions $Z^{\text{KW}}$ and $Z^{\text{BGW}}$ by replacing the $\hat{R}$ operator in \eqref{eq:main} with the identity operator.

Recall that a two-dimensional topological field theory (2D TFT) is a vector space $H$ and a sequence of symmetric linear maps
\[ I_{g,n}:H^{\otimes n}\to \bc\]
for integers $g\geq 0$ and $n>0$ satisfying the following conditions.  The map $I_{0,2}$ is a non-degenerate bilinear form on $H$ i.e. it defines a metric $h$, with dual bivector $\Delta=h^{\alpha\beta}e_{\alpha}\otimes e_{\beta}$ (defined with respect to a basis $\{ e_{\alpha}\}$ of $H$).  The map $I_{0,3}$ together with $h=I_{0,2}$ defines a
product $\Cdot$ on $H$ via
$$h(v_1\Cdot v_2,v_3)=I_{0,3}(v_1,v_2,v_3)$$
with identity $\un$ given by the dual of $I_{0,1}=\un^*=h(\un,\cdot)$.  It satisfies the natural insertion of $\un$ condition $I_{g,n+1}(\un\otimes v_1\otimes...\otimes v_n)=I_{g,n}(v_1\otimes...\otimes v_n)$ and gluing conditions 
$$
I_{g,n}(v_1\otimes...\otimes v_n)=I_{g-1,n+2}(v_1\otimes...\otimes v_n\otimes\Delta)=I_{g_1,|I|+1}\otimes I_{g_2,|J|+1}\big(\bigotimes_{i\in I}v_i\otimes\Delta\otimes\bigotimes_{j\in J}v_j\big)$$
for $g=g_1+g_2$ and $I\sqcup J=\{1,...,n\}$.
 
Via the natural isomorphism $H^0({\overline{\modm}_{g,n})}\cong\bc$ we consider $I_{g,n}:H^{\otimes n}\to H^0({\overline{\modm}_{g,n}})$ and integrate these classes next to Chern classes of the tautological line bundles $\cl_i$ on the moduli space of stable curves $\overline{\modm}_{g,n}$.  With respect to a basis $\{ e_{\nu_i}\}$ of $H$, we define the partition function:
\begin{equation}  \label{partfn}
Z(\hbar,\{v^{k,j}\})=\exp\sum_{g,n}\hbar^{g-1}\frac{1}{n!}\int_{\overline{\modm}_{g,n}}I_{g,n}(e_{\nu_1},...,e_{\nu_n})\cdot\prod_{j=1}^nc_1(\cl_j)^{k_j}v^{k_j,\nu_j}.
\end{equation}
The partition function \eqref{partfn} for the trivial dimension 1 TFT, where $I_{g,n}(\un^{\otimes n})=1$, stores intersection numbers of $\psi$ classes on $\overline{\modm}_{g,n}$ and hence $Z(\hbar,\{v^{k,1}\})=Z^{\text KW}(\hbar,\{v^{k,1}\})$, the Kontsevich-Witten partition function, as described in Section~\ref{sec:BGW}.

A 2D TFT is known as {\em semisimple} if its associated Frobenius algebra $(H,\eta,\Cdot)$ is {\em semisimple}, i.e.
$$H\cong\bc\oplus\bc\oplus...\oplus\bc,\quad\langle e_i,e_j \rangle=\delta_{ij}\eta_i,\quad e_i\cdot e_j=\delta_{ij}e_i$$
for some $\eta_i\in\bc \setminus \{0\}$, $i=1,...,D$ where $e_i^{(j)}=\delta_{ij}$ is the standard basis.  For a semisimple 2D TFT, the $I_{g,n}$ decompose into a sum of 1-dimensional TFTs and hence are extremely simple.  A 1-dimensional TFT depends on a single complex number $I_{0,1}(\un)=\eta\in\bc$ which determines $I_{g,n}(\un^{\otimes n})=\eta^{1-g}$.  Its partition function \eqref{partfn} is simply $Z^{\text KW}(\eta^{-1}\hbar,\{u^{k,1}\})$ since the $\eta^{1-g}$ is naturally absorbed by the $\hbar^{g-1}$.  The coordinate $u^{k,1}$ corresponds to the unit vector $\un$, and we instead use $v^{k,1}=\eta^{\frac12}u^{k,1}$ corresponding to an orthonormal basis, so $Z^{\text KW}(\eta^{-1}\hbar,\{u^{k,1}\})=Z^{\text KW}(\eta^{-1}\hbar,\{\eta^{-\frac12}v^{k,1}\})$.  Hence the partition function of a semisimple 2D TFT, where $I_{g,n}$ vanishes on mixed monomials in the $e_i$ and $I_{g,n}(e_i^{\otimes n})=\eta_i^{1-g}$, is given by a product
$$Z(\hbar,\{v^{k,j}\})=Z^{\text{KW}}(\eta_1^{-1}\hbar,\{\eta_1^{-\frac12}v^{k,1}\})\cdots Z^{\text{KW}}(\eta_D^{-1}\hbar,\{\eta_D^{-\frac12}v^{k,D}\}).
$$
For storing the numbers $I_{g,n}(e_{\nu_1},...,e_{\nu_n})$ this may seem more complicated than necessary, however it brings useful insight to the formula \eqref{eq:main}, and is precisely necessary to store the highest order parts $\omega_{g,n}^{\text{top}}(p_1,...,p_n)$ of the correlators $\omega_{g,n}$ of a spectral curve $S=(\Sigma,B,x,y)$.  A generalisation of this idea couples a TFT to $Z^{\text BGW}(\hbar,\{v^{k,1}\})$ via $Z^{\text BGW}(\eta^{-1}\hbar,\{\eta^{-\frac12}u^{k,1}\})$.    Thus
\begin{equation}  \label{ztop}
Z^{S_0}(\hbar,\{v^{k,i}\})=\hat{\Delta}Z^{\text{BGW}}(\hbar,\{v^{k,1}\})\cdots Z^{\text{BGW}}(\hbar,\{v^{k,m}\})Z^{\text{KW}}(\hbar,\{v^{k,m+1}\})\cdots Z^{\text{KW}}(\hbar,\{v^{k,D}\})
\end{equation}
where $S$, hence $S_0$, has $k$ irregular zeros of $dx$ at which $y$ has simple poles, and $D-m$ regular zeros, $\hat{\Delta}(\hbar)=\eta_i^{-1}\hbar$ in the $i$th factor and $\hat{\Delta}(v^{k,i})=\eta_i^{-\frac12}v^{k,i}$.

The expressions \eqref{eq:asym} and \eqref{eq:asymirr}, hence also \eqref{ztop}, depend only on the local behaviour of $x$ and $y$ in a neighbourhood of $\cp_i$, and are independent of $B$.  The formula \eqref{eq:main} strengthens this result to show that all lower asymptotic terms of $\omega_{g,n}$, hence $\omega_{g,n}$ itself, can also be obtained from $\omega_{g,n}^{\text{Airy}}$ and $\omega_{g,n}^{\text{Bessel}}$ by also using $B$.

\section{Givental decomposition}  \label{sec:decomp}

\subsection{Translations}  \label{sec:trans}
The translation term $\hat{T}$ in \eqref{eq:main} translates the arguments $v^{k,\alpha}$ in the tau functions:
$$Z^{\text{KW}}(\hbar,\{v^{k,\alpha}\})\mapsto Z^{\text{KW}}(\hbar,\{v^{k,\alpha}+c^{k,\alpha}\}),\quad Z^{\text{BGW}}(\hbar,\{v^{k,\beta}\})\mapsto Z^{\text{BGW}}(\hbar,\{v^{k,\beta}+c^{k,\beta}\}).
$$
Translations arise from local expansions of $y$ near each zero $\cp_\alpha$ of $dx$.
To study these translations, we restrict to the case where $dx$ has a single zero.  Let $x=\frac{1}{2}z^2$, $y=\hspace{-2mm}\displaystyle\sum_{k=-1}^{\infty}y_kz^{k}$.  This is a deformation of the Bessel curve, or a deformation of the Airy curve when $y_{-1}=0$ and $y_1\neq 0$.  General deformations of the Airy curve were studied in \cite{DOSSIde,EynRec}.  Propositions~\ref{th:defair} and \ref{th:defbes} below show that the correlators of deformations of the Airy and Bessel curves depend linearly on the coefficients $a_{g,n}$ and $b_{g,n}$ of the correlators $\omega_{g,n}^{\text{Airy}}$ and $\omega_{g,n}^{\text{Bes}}$ defined by
\begin{equation}  \label{eq:abcoeff}
\omega_{g,n}^{\text{Airy}}=\sum_{\mu\in\bz_+^n}a_{g,n}(\mu)\prod_{i=1}^n\frac{dz_i}{z_i^{\mu_i+1}},\quad \omega_{g,n}^{\text{Bes}}=\sum_{\mu\in\bz_+^n}b_{g,n}(\mu)\prod_{i=1}^n\frac{dz_i}{z_i^{\mu_i+1}}
\end{equation}
and homogeneously in the coefficients $y_k$ of $y$. 
We will begin with the statement of the known result for deformations of the Airy curve.
\begin{proposition}[\cite{DOSSIde,EynInv,EynRec}]   \label{th:defair}
For $x=\frac{1}{2}z^2$, $y=\displaystyle\sum_{k=1}^{\infty}y_kz^k$, $B=\frac{dzdz'}{(z-z')^2}$ and $2g-2+n>0$,
\begin{equation}  \label{defair}
\omega_{g,n}(z_1,...,z_n)=y_{1}^{2-2g-n}\sum_{m=0}^{\infty}\sum_{\vec{d},\vec{\alpha}}\prod_{i=1}^n\frac{dz_i}{z_i^{d_i+1}}\frac{(-1)^m}{m!}a_{g,n+m}(\vec{d},\vec{\alpha})\prod_{k=1}^m\left(\frac{y_{\alpha_k-2}}{y_{1}}\frac{1}{\alpha_k}\right)
\end{equation}
where $\vec{d}=(d_1,...,d_n)$, $\vec{\alpha}=(\alpha_1,...,\alpha_m)$ with $\alpha_k>3$, $a_{g,n}$ is defined in \eqref{eq:abcoeff} and the product $\displaystyle\prod_{k=1}^m(\cdot)$ is 1 when $m=0$.  Note that $a_{g,n+m}(\vec{d},\vec{\alpha})= 0$ when any $d_i$ or $\alpha_i$ is even or when $|\vec{d}|+|\vec{\alpha}|\neq 6g-6+3n$ so the sum is finite.
\end{proposition}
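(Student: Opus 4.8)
\emph{Proof plan.} Since \eqref{defair} is homogeneous under rescaling $y$, the first step is to remove the factor $y_1$. Writing $y=y_1\bigl(z+\sum_{k\ge 2}(y_k/y_1)z^k\bigr)$ and observing that under $y\mapsto\lambda y$ (keeping $x,B$ fixed) the recursion kernel scales as $K\mapsto\lambda^{-1}K$ while $\omega_{0,2}$ is unchanged, an induction on $2g-2+n$ through \eqref{TRrec} gives $\omega_{g,n}\mapsto\lambda^{2-2g-n}\omega_{g,n}$. It therefore suffices to prove \eqref{defair} for $y=z+\sum_{k\ge 2}y_kz^k$, i.e.\ for a deformation of the Airy curve, in which case the factor $y_1^{2-2g-n}$ disappears and each $y_{\alpha_k-2}/y_1$ is replaced by $y_{\alpha_k-2}$. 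In this normalisation we treat $y_2,y_3,\dots$ as formal deformation parameters with $\omega_{g,n}\big|_{y_2=y_3=\cdots=0}=\omega^{\text{Airy}}_{g,n}$, and prove \eqref{defair} as the Taylor expansion of $\omega_{g,n}$ at the Airy point.

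The engine is a variational formula: for $j\ge 1$ and $2g-2+n>0$,
$$\frac{\partial}{\partial y_j}\,\omega_{g,n}(z_1,\dots,z_n)\;=\;-\,\frac{1}{j+2}\,\res_{z=0}\Bigl(z^{\,j+2}\,\omega_{g,n+1}(z_1,\dots,z_n,z)\Bigr),$$
where $z^{j+2}\omega_{g,n+1}$ means the meromorphic $1$-form in $z$ got by multiplying the coefficient of $dz$ in the last slot of $\omega_{g,n+1}$ by $z^{j+2}$. This is the holomorphic-deformation case of the standard form--cycle variational formula of topological recursion \cite{EynInv,EynRec}: the only input depending on $y_j$ is $\omega_{0,1}=-y\,dx$, with $\partial_{y_j}\omega_{0,1}=-z^{j+1}dz$ whose primitive vanishing at $z=0$ is $-z^{j+2}/(j+2)$, while $\partial_{y_j}\omega_{0,2}=0$; one differentiates \eqref{TRrec} in $y_j$ and checks, by induction on $2g-2+n$, that the residues produced --- from $\partial_{y_j}$ on the bracket and from $\partial_{y_j}$ on the kernel's denominator $(y(z)-y(-z))\,dx$ --- reassemble into exactly the residue produced by one more step of \eqref{TRrec} with an extra insertion weighted by $-z^{j+2}/(j+2)$. (The case $j=1$ is the dilaton equation and matches the homogeneity of the first step; for even $j$ both sides vanish, consistent with $\omega_{g,n}$ depending on $y$ only through the odd part of $y(z)-y(-z)$.)

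Granting this, we conclude by iteration. A grading argument --- give $dz_i/z_i^{d_i+1}$ degree $d_i$ and $y_k$ degree $k-1$, under which $\omega_{g,n}$ is homogeneous of degree $6g-6+3n$ and every factor has degree $\ge 1$ --- shows $\omega_{g,n}$ contains at most $6g-6+2n$ factors of the $y_k$'s, so its Taylor series at the Airy point is a finite polynomial. Applying the variational formula $m$ times and then setting $y_{\ge 2}=0$,
$$\frac{\partial^m\,\omega_{g,n}}{\partial y_{j_1}\cdots\partial y_{j_m}}\bigg|_{y_{\ge 2}=0} \;=\;\frac{(-1)^m}{\prod_{k=1}^m(j_k+2)}\ \res_{w_1=0}\!\!\cdots\res_{w_m=0}\ \Bigl(\textstyle\prod_{k=1}^m w_k^{\,j_k+2}\Bigr)\ \omega^{\text{Airy}}_{g,n+m}(z_1,\dots,z_n,w_1,\dots,w_m);$$
with $\omega^{\text{Airy}}_{g,n+m}=\sum_{\vec\mu}a_{g,n+m}(\vec\mu)\prod dz_i/z_i^{\mu_i+1}$ (notation of \eqref{eq:abcoeff}), the residue in $w_k$ selects $\mu_{n+k}=j_k+2$. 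Setting $\alpha_k=j_k+2$ (so $\alpha_k>3$) and substituting into $\omega_{g,n}=\sum_m\tfrac1{m!}\sum_{j_1,\dots,j_m\ge 2}\bigl(\partial^m_{\vec y}\omega_{g,n}\big|_0\bigr)\prod y_{j_k}$ reproduces \eqref{defair} with $y_1=1$; restoring $y_1$ by the homogeneity step gives the general formula. The stated vanishing of $a_{g,n+m}(\vec d,\vec\alpha)$ when an entry is even, or when $|\vec d|+|\vec\alpha|\neq 6g-6+3(n+m)$, follows from the parity and degree of $\omega^{\text{Airy}}_{g,n+m}$, and together with $\alpha_k\ge 5$ bounds $m$, giving finiteness of the sum.

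The main obstacle is the variational formula, in particular pinning down its normalisation: one must track carefully the contribution from differentiating the kernel --- whose denominator $(y(z)-y(-z))\,dx$ genuinely depends on $y_j$ --- and check that after taking residues it combines with the derivatives of $\omega_{g-1,n+1}(z,-z,\cdot)$ and of the products $\omega_{g_1}\omega_{g_2}$ to reproduce precisely one extra insertion in \eqref{TRrec}. An alternative route bypassing the variational formula is a direct induction: substitute $K(z_1,z)=K^{\text{Airy}}(z_1,z)/\bigl(y_1\,h(z^2)\bigr)$, with $h(z^2)=1+\sum_{j\ge 1}(y_{2j+1}/y_1)z^{2j}$ (the kernel sees $y$ only through $y(z)-y(-z)$, hence only its odd coefficients), into \eqref{TRrec}, expand the geometric series, and match the resulting residues term by term against the Airy recursion; the bookkeeping of how the $m$ monomial insertions distribute over the subcorrelators is then the analogue of the same difficulty.
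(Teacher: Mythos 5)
Your proposal is correct and follows essentially the same route the paper takes: the paper cites Proposition~\ref{th:defair} from the literature but proves the parallel Bessel case (Proposition~\ref{th:defbes}) by exactly your method --- homogeneity in $y$ to isolate the leading coefficient, the variational formula expressing $\partial_{y_k}\omega_{g,n}$ as a residue of $\omega_{g,n+1}$ against $\frac{1}{k+2}z^{k+2}$ (your residue at $0$ with a minus sign equals the paper's residue at $\infty$, since $\omega_{g,n+1}$ has poles only at the branch point), iteration to read off Taylor coefficients at the Airy/Bessel point, and parity/degree constraints for finiteness. Your handling of the $1/m!$ versus the automorphism factor and of the shift $\alpha_k=j_k+2$ matches the paper's bookkeeping.
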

The statements of Proposition~\ref{th:defair} in the literature, such as Lemma 3.5 in \cite{DOSSIde}, use intersection numbers on the moduli space of stable curves related via $a_{g,n}(2m_1+1,...,2m_n+1)=\int_{\overline{\modm}_{g,n}}\prod_{k=1}^n(2m_k+1)!!\psi_k^{m_k}$. 

\begin{proposition}   \label{th:defbes}
For $x=\frac{1}{2}z^2$, $y=\displaystyle\sum_{k=-1}^{\infty}y_kz^k$, $B=\frac{dzdz'}{(z-z')^2}$ and $2g-2+n>0$,
\begin{equation}  \label{defbes}
\omega_{g,n}(z_1,...,z_n)=y_{-1}^{2-2g-n}\sum_{m=0}^{g-1}\sum_{\vec{d},\vec{\alpha}}\prod_{i=1}^n\frac{dz_i}{z_i^{d_i+1}}\frac{(-1)^m}{m!}b_{g,n+m}(\vec{d},\vec{\alpha})\prod_{k=1}^m\left(\frac{y_{\alpha_k-2}}{y_{-1}}\frac{1}{\alpha_k}\right)
\end{equation}
where $\vec{d}=(d_1,...,d_n)$, $\vec{\alpha}=(\alpha_1,...,\alpha_m)$ with $\alpha_k>1$, $b_{g,n}$ is defined in \eqref{eq:abcoeff} and the product $\displaystyle\prod_{k=1}^m(\cdot)$ is 1 when $m=0$.  Note that $b_{g,n+m}(\vec{d},\vec{\alpha})= 0$ when any $d_i$ or $\alpha_i$ is even or when $|\vec{d}|+|\vec{\alpha}|\neq 2g-2+n$ so the sum is finite.
\end{proposition}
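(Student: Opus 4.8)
The plan is to mirror the (cited) proof of Proposition~\ref{th:defair}, with the Bessel curve $S_{\text{Bes}}$ replacing the Airy curve as the undeformed local model and the single simple pole of $y$ at the branch point $z=0$ replacing the simple zero. First I would normalise. The correlators produced by \eqref{TRrec} are homogeneous of degree $2-2g-n$ under the rescaling $y\mapsto\lambda y$ (the kernel $K$ scales by $\lambda^{-1}$ and the bracket on the right of \eqref{TRrec} by $\lambda^{3-2g-n}$, compatibly with the induction on $2g-2+n$), which produces the prefactor $y_{-1}^{2-2g-n}$ and lets me assume $y_{-1}=1$. Moreover the even coefficients $y_{2k}$ (in particular $y_0$) are irrelevant, since $\omega_{0,1}$ enters \eqref{TRrec} only through $K$, whose denominator contains $y(z)-y(\sigma_\alpha(z))=y(z)-y(-z)=2\sum_{k\ \mathrm{odd}}y_kz^{k}$. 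Writing $t_{2j-1}:=y_{2j-1}/y_{-1}$, this reduces the statement to the family of local spectral curves with $y_t(z)=z^{-1}+\sum_{j\ge1}t_{2j-1}z^{2j-1}$, $x=\tfrac12z^2$, $B=\frac{dzdz'}{(z-z')^2}$, whose member at $t=0$ is $S_{\text{Bes}}$, with correlators $\omega^{\text{Bes}}_{g,n}$ and coefficients $b_{g,n}$ as in \eqref{eq:abcoeff}.

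The crux is then the variational identity, valid for $2g-2+n>0$ and every $j\ge1$,
\[
\frac{\partial}{\partial t_{2j-1}}\,\omega_{g,n}(z_1,\dots,z_n)=-\frac{1}{2j+1}\,\res_{z=0}z^{2j+1}\,\omega_{g,n+1}(z,z_1,\dots,z_n),
\]
for the correlators of $y_t$ at an arbitrary value of the parameters. I would prove this by differentiating \eqref{TRrec} with respect to $t_{2j-1}$---the kernel $K$ depends on $t_{2j-1}$ only through $y_t(z)-y_t(-z)=2z^{-1}\bigl(1+\sum_{l\ge1}t_{2l-1}z^{2l}\bigr)$ in its denominator, so $\partial_{t_{2j-1}}K$ equals $K$ times an explicit rational function of $z$---and then closing an induction on $2g-2+n$, the right-hand side being recognised as the topological-recursion ``insertion of one extra leg'' paired against the primitive $-\tfrac{1}{2j+1}z^{2j+1}$ of $\partial_{t_{2j-1}}\omega_{0,1}=-z^{2j}dz$ (the constant of integration is immaterial because $\omega_{g,n+1}$, having only odd-order poles at $z=0$, leaves no residue against a constant). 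This is the local, irregular-branch-point counterpart of the Eynard--Orantin deformation formula.

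Granting the identity, one integrates it. Taylor expanding $\omega_{g,n}$ in the $t_{2j-1}$ about $t=0$, where $\omega_{g,n}|_{t=0}=\omega^{\text{Bes}}_{g,n}$, an $m$-fold derivative $\partial_{t_{2j_1-1}}\cdots\partial_{t_{2j_m-1}}\omega_{g,n}|_{t=0}$ equals $\prod_{k=1}^m\bigl(-\tfrac{1}{\alpha_k}\bigr)$ times $\res_{w_1=0}\cdots\res_{w_m=0}\prod_k w_k^{\alpha_k}\,\omega^{\text{Bes}}_{g,n+m}(w_1,\dots,w_m,z_1,\dots,z_n)$, with $\alpha_k:=2j_k+1>1$ odd; and since $\omega^{\text{Bes}}_{g,N}=\sum_\nu b_{g,N}(\nu)\prod\frac{dw_i}{w_i^{\nu_i+1}}$, the residue $\res_{w_k=0}w_k^{\alpha_k}\frac{dw_k}{w_k^{\nu_k+1}}$ selects $\nu_k=\alpha_k$. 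Assembling the Taylor series reproduces \eqref{defbes} term by term: the sum over tuples $\vec\alpha=(\alpha_1,\dots,\alpha_m)$, the factor $\tfrac{1}{m!}$ from the Taylor coefficient, the $(-1)^m$ and $\prod_k\tfrac{1}{\alpha_k}$ from the accumulated constants $-\tfrac{1}{\alpha_k}$, and---after undoing the normalisation---the prefactor $y_{-1}^{2-2g-n}$ together with the factors $\tfrac{y_{\alpha_k-2}}{y_{-1}}$. Finiteness of the sum is automatic from the grading of the Bessel correlators: $b_{g,N}(\nu_1,\dots,\nu_N)=0$ unless every $\nu_i$ is odd and $\sum_i\nu_i=2g-2+N$, so for $b_{g,n+m}(\vec d,\vec\alpha)$ with each $d_i\ge1$ and each $\alpha_k\ge3$ one gets $2g-2+n+m\ge n+3m$, hence $0\le m\le g-1$.

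I expect the entire weight of the argument to rest on the variational identity of the second step. The usual Eynard--Orantin deformation formula is stated for global spectral curves and for deformations of $y\,dx$ that are meromorphic with poles at the branch points, whereas the deformation relevant here, $\partial_{t_{2j-1}}(y\,dx)=z^{2j}dz$, is holomorphic at the branch point $z=0$; so the identity must instead be obtained directly by differentiating \eqref{TRrec} and carrying the induction on $2g-2+n$, keeping careful track of the $t$-dependence of $K$ and verifying that $\res_{w=0}w^{2j+1}\omega_{g,n+1}(w,-)$ satisfies exactly the recursion forced on $\partial_{t_{2j-1}}\omega_{g,n}$. A secondary and routine point is that every $y_t$ remains an admissible local spectral curve---$dx=z\,dz$ has a simple zero at $z=0$ disjoint from the (simple) pole of $y_t$---so that both \eqref{TRrec} and the variational identity are valid along the whole family.
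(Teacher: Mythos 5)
Your proposal is correct and follows essentially the same route as the paper: establish homogeneity in $y_{-1}$ and polynomiality in the odd coefficients $y_{2k-1}$ from the expansion of the kernel, apply the variational formula $\frac{\partial}{\partial y_k}\omega_{g,n}=\res_{z_0=\infty}\frac{1}{k+2}z_0^{k+2}\omega_{g,n+1}$ (equivalently your $-\frac{1}{2j+1}\res_{z=0}$ version), evaluate the iterated derivatives at the Bessel point to extract the coefficients $b_{g,n+m}$, and read off the bound $m\le g-1$ from the grading. The only difference is that where you propose to prove the variational identity directly by differentiating \eqref{TRrec} and inducting on $2g-2+n$, the paper simply invokes the known variational formula of \cite{CMMV,EOrInv} for deformations of $ydx$ realised as integrals of $B$ over a generalised cycle at infinity (which avoids the branch point, so the irregularity at $z=0$ causes no difficulty).
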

\begin{proof}
The dependence of the kernel $K(z_1,z)$ on $y_{2k-1}$ in the recursion \eqref{TRrec} is given by
\begin{align*}
K(z_1,z)&=\frac{1}{2}\frac{\int_{-z}^zB(z',z_1)}{(y(z)-y(-z))dx(z)}=\frac{dz_1}{2(z_1^2-z^2)\displaystyle{\sum_{k=0}^{\infty}} y_{2k-1}z^{2k-1}dz}\\
&=\frac{1}{y_{-1}}\frac{zdz_1}{2(z_1^2-z^2)(1+\displaystyle{\sum_{k=1}^{\infty}} \frac{y_{2k-1}}{y_{-1}}z^{2k})dz}
=\frac{1}{y_{-1}}\frac{zdz_1}{2(z_1^2-z^2)dz}\sum_{m=0}^{\infty}(-1)^m\left( \sum_{k=1}^{\infty}\frac{y_{2k-1}}{y_{-1}}z^{2k}\right)^m.
\end{align*}
Thus each summand contributes a term of homogeneous degree $m$ in the $y_{2k-1}$ for $k\geq 1$ to $K(z_1,z)$.  Thus we can write $\omega_{g,n}$ as
\begin{align*}
\omega_{g,n}(z_1,z_2,..,z_n)&=\res_{z=0}K(z_1,z) \bigg[\omega_{g-1,n+1}(z,-z,z_2,..,z_n)+\hspace{-5mm} \mathop{\sum_{g_1+g_2=g}}_{I\sqcup J=\{2,..,n\}}^{\circ} \hspace{-3mm}\omega_{g_1,|I|+1}(z,\zz_I) \, \omega_{g_2,|J|+1}(-z,\zz_J) \bigg]\\
&=y_{-1}^{2-2g-n}\sum_{m=0}^{g-1}\sum_{\vec{d},\vec{\alpha}}\prod_{i=1}^n\frac{dz_i}{z_i^{d_i+1}}(-1)^m
\prod_{k=1}^m\frac{y_{\alpha_k}}{y_{-1}}C_{g,n+m}(\vec{d},\vec{\alpha})
\end{align*}
for some constants $C_{g,n+m}(\vec{d},\vec{\alpha})$ which we will show coincide with coefficients $\frac{1}{m!}b_{g,n+m}(\vec{d},\vec{\alpha}+2)$ of $\omega_{g,n}^{\text{Bes}}$ (for $\vec{\alpha}+2=(\alpha_1+2,...,\alpha_k+2)$).  The dependence on only odd $\alpha_k$ is realised by the same property of $b_{g,n+m}(\vec{d},\vec{\alpha})$.  It is easy to prove by induction that $\omega_{g,n}$ is a polynomial in the $y_k$ for odd positive $k$.

A variational formula for a family of spectral curves depending on a parameter was proven in \cite{CMMV,EOrInv}.
When the variation of $ydx$ is given by integration of $B(p_1,p_2)$ around a generalised cycle
$$\frac{\partial}{\partial y_k}(-ydx)=-z^{k+1}dz=-\frac{1}{k+2}d(z^{k+2})=-\res_{z_0=z}\frac{1}{k+2} z_0^{k+2}B(z_0,z)=\res_{z_0=\infty} \frac{1}{k+2}z_0^{k+2}B(z_0,z)
$$
it determines the variation of $\omega_{g,n}$ to be integration of $\omega_{g,n+1}$ around the same generalised cycle
$$\frac{\partial}{\partial y_k}\omega_{g,n}(z_1,...,z_n)=\res_{z_0=\infty} \frac{1}{k+2}z_0^{k+2}\omega_{g,n+1}(z_0,z_1,...,z_n).
$$
Write $[y_iy_j...y_k]f$ for the coefficient of $y_iy_j...y_k$ in a polynomial $f$ of the variables $y_i$.
\begin{align*}
\left[\prod_{k=1}^my_{\alpha_k}\right]\omega_{g,n}&=\frac{1}{|\text{Aut}(\alpha)|}\left.\frac{\partial^m}{\partial_{y_{\alpha_1}}...\partial_{y_{\alpha_m}}}\omega_{g,n}\right|_{y_i=0}\\
&=\frac{1}{|\text{Aut}(\alpha)|}\left.\res_{z_{n+1}=\infty}...\res_{z_{n+m}=\infty}\prod_{k=1}^m\frac{1}{\alpha_k+2}z_{n+k}^{\alpha_k+2}\omega_{g,n+m}(z_1,...,z_{n+m})\right|_{y_i=0}\\
&=\frac{1}{|\text{Aut}(\alpha)|}\res_{z_{n+1}=\infty}...\res_{z_{n+m}=\infty}\prod_{k=1}^m\frac{1}{\alpha_k+2}z_{n+k}^{\alpha_k+2}\omega_{g,n+m}^{\text{Bes}}(z_1,...,z_{n+m})\\
&=\sum_{\vec{d}}\prod_{i=1}^n\frac{dz_i}{z_i^{d_i+1}}\frac{(-1)^m}{|\text{Aut}(\alpha)|}\prod_{k=1}^m\frac{1}{\alpha_k+2}b_{g,n}(d_1,...,d_n,\alpha_1+2,...,\alpha_m+2)
\end{align*}
since $\displaystyle\omega_{g,n}^{\text{Bes}}=\sum_{\mu\in\bz_+^n}b_{g,n}(\mu)\prod_{i=1}^n\frac{dz_i}{z_i^{\mu_i+1}}$.  Here $|\text{Aut}(\alpha)|=d_1!...d_j!$ is the order of the automorphism group of $\alpha=(\alpha_1,...,\alpha_m)=(\overbrace{\alpha_1,..,\alpha_1}^{d_1},...,\overbrace{\alpha_j,..,\alpha_j}^{d_j})$ which contains $j\leq m$ distinct $\alpha_k$.  After summing over all $m$-tuples, we replace $1/|\text{Aut}(\alpha)|$ with $1/m!$, and we shift each $\alpha_k$  to $\alpha_k-2$ to get \eqref{defbes} as required.
\end{proof}
It may be helpful to read \eqref{defbes} via small genus examples:
\begin{align*}
\omega_{1,n}&= y_{-1}^{-n}\prod_{i=1}^n\frac{dz_i}{z_i^2}\cdot\frac{1}{8}(n-1)!=y_{-1}^{-n}\prod_{i=1}^n\frac{dz_i}{z_i^2}\cdot b_{1,n}(1,1,...,1)
\\
\omega_{2,n}&=y_{-1}^{-n-2}\prod_{i=1}^n\frac{dz_i}{z_i^2}\left(\sum_{j=1}^n\frac{1}{z_j^2}\frac{9}{256}(n+1)!-\frac{y_1}{y_{-1}}\frac{3}{256}(n+2)! \right)\\
&=y_{-1}^{-n-2}\prod_{i=1}^n\frac{dz_i}{z_i^2}\left(\sum_{j=1}^n\frac{1}{z_j^2}b_{2,n}(3,1,...,1)-\frac{y_1}{y_{-1}}\frac{1}{3}b_{2,n+1}(1,...,1,3)\right)
\\
\omega_{3,n}&=y_{-1}^{-n-4}\prod_{i=1}^n\frac{dz_i}{z_i^2}\Big(\sum_{j=1}^n\frac{1}{z_j^4}\frac{75}{8192}(n+3)!+\sum_{i,j}\frac{1}{z_i^2z_j^2}\frac{189}{20480}(n+3)!\\
&\quad-\left[\sum_j\frac{1}{z_j^2}\frac{y_1}{y_{-1}}\frac{63}{20480}(n+4)!
+\frac{y_3}{y_{-1}}\frac{15}{8192}(n+4)!\right]+\frac{y_1^2}{y_{-1}^2}\frac{21}{40960}(n+5)!
\Big)\\
&=y_{-1}^{-n-4}\prod_{i=1}^n\frac{dz_i}{z_i^2}\Big(\sum_{j=1}^n\frac{1}{z_j^4}b_{3,n}(5,1,...,1)+\sum_{i,j}\frac{1}{z_i^2z_j^2}b_{3,n}(3,3,1,...,1)\\
&\ \ -\left[\sum_j\frac{1}{z_j^2}\frac{y_1}{y_{-1}}\frac{1}{3}b_{3,n+1}(3,1,...,1,3)
+\frac{y_3}{y_{-1}}\frac{1}{5}b_{3,n+1}(1,1,...,1,5)\right]+\frac{(-1)^2}{2!}\frac{y_1^2}{y_{-1}^2}\frac{1}{3^2}b_{3,n+2}(1,...,1,3,3)
\Big)
\end{align*}
Note that we place the arguments of the symmetric function $b_{g,n}$ at different ends to emphasise their origin.

\subsection{Graphical expansion}  \label{sec:graph}
In this section we generalise the weighted graphical expansion of the correlators $\omega_{g,n}$ for regular spectral curves proven in \cite{DOSSIde,EynInv} to allow for irregular spectral curves.

Given a set $\{1,...,.D\}$ which will correspond to the zeros of $dx$ on a spectral curve consider the following set of decorated graphs.
\begin{definition}
For a graph $\gamma$ denote by
$$V(\gamma),\quad E(\gamma),\quad H(\gamma),\quad L(\gamma)=L^*(\gamma)\sqcup L^\bullet(\gamma)$$
its set of vertices, edges, half-edges and leaves.  The disjoint splitting of $L(\gamma)$ into ordinary leaves, $L^*$, and dilaton leaves, $L^\bullet$, is part of the structure on $\gamma$.  The set of half-edges consists of leaves and oriented edges so there is an injective map $L(\gamma)\to H(\gamma)$ and a multiply-defined map $E(\gamma)\to H(\gamma)$ denoted by $E(\gamma)\ni e\mapsto \{e^+,e^-\}\subset H(\gamma)$.
The map sending a half-edge to its vertex is given by $v:H(\gamma)\to V(\gamma)$.  Decorate $\gamma$ by  functions:
\begin{align*}
g&:V(\gamma)\to\bn\\
\alpha&:V(\gamma)\to\{1,...,D\}\\
p&:L^*(\gamma)\stackrel{\cong}{\to}\{p_1,p_2,...,p_n\}\subset\Sigma\\
k&:H(\gamma)\to\bn
\end{align*}
such that  $k|_{L^\bullet(\gamma)}>1$ and $n=|L^*(\gamma)|$.  We write $g_v=g(v)$, $\alpha_v=\alpha(v)$, $\alpha_\ell=\alpha(v(\ell))$, $p_\ell=p(\ell)$, $k_\ell=k(\ell)$.
The {\em genus} of $\gamma$ is $g(\gamma)=\displaystyle b_1(\gamma)+\hspace{-2mm}\sum_{v\in V(\gamma)}\hspace{-2mm}g(v)$ and $\gamma$ is stable if any vertex labeled by $g=0$ is of valency $\geq 3$.   We write $n_v$ for the valency of the vertex $v$.
For $2g-2+n$, define $\Gamma_{g,n}$ to be the set of all stable connected genus $g$ decorated graphs with $n$ ordinary leaves.
\end{definition}

We now express the correlators of a spectral curve as a sum over decorated graphs, with vertices weighted by coefficients of $\omega_{g,n}^{\text{Airy}}$ or $\omega_{g,n}^{\text{Bes}}$, edges weighted by coefficients of local expansions of $B$, ordinary leaves weighted by differentials determined by $(\Sigma,B,x)$, and dilaton leaves weighted by coefficients of local expansions of $y$. We follow the exposition in \cite{DOSSIde} for the regular case and generalise it to the irregular case.

A spectral curve $S=(\Sigma,B,x,y)$ defines a disjoint splitting $V(\gamma)=V^{\text{reg}}(\gamma)\sqcup V^{\text{irreg}}(\gamma)$ of $V(\gamma)$ into regular and irregular vertices.  Label the zeros of $dx$ by $\cp_1,...,\cp_D$ and define a vertex $v$ of $\gamma$ to be irregular if $y$ has a pole at $\cp_{\alpha_v}(v)$.  The spectral curve $S$ also defines weights on any decorated graph $\gamma\in\Gamma_{g,n}$ which will be used to produce correlators $\omega_{g,n}$ of $S$ as weighted sums over all decorations on graphs of type $(g,n)$.  The weights are defined as follows.
\begin{definition}  \label{weights}
{\em Vertex weights.}
$$W(v)=\left\{\begin{array}{ll}\hbar^{g-1}y_{1,\alpha_v}^{2-2g_v-n_v}a_{g_v,n_v}(\{k_h\mid h\in H(v)\}),&v\in V^{\text{reg}}\\
\hbar^{g-1}y_{-1,\alpha_v}^{2-2g_v-n_v}b_{g_v,n_v}(\{k_h\mid h\in H(v)\}),&v\in V^{\text{irreg}}
\end{array}\right.
$$
where $y_{1,\alpha_v}=dy(\cp_{\alpha_v})$, $y_{-1,\alpha_v}=(ydx)(\cp_{\alpha_v})$ and $a_{g,n}(k_1,...,k_n)$ and $b_{g,n}(k_1,...,k_n)$---defined in \eqref{eq:abcoeff}---are symmetric functions of $k_i$ so it makes sense to take in a set of cardinality $n$.

{\em Edge weights.}
$$ W(e)=\hbar(2k_{e^+}-1)!!(2k_{e^-}-1)!!B^{\alpha_{e^+},\alpha_{e^-}}_{2k_{e^+},2k_{e^-}}
$$
where 
with respect to the local coordinates $z,z'$ defined by $x=\frac12 z^2+x(\cp_{\alpha})$ and $x=\frac12 z'^2+x(\cp_{\alpha'})$
$$ B^{\alpha,\alpha'}(z,z')=\delta_{\alpha,\alpha'}\frac{dzdz'}{(z-z')^2}+\sum B^{\alpha,\alpha'}_{m,m'}z^mz'^{m'}dzdz'.
$$
Here $\{e^+,e^-\}$ are different orientations of the edge $e$, and by the symmetry $B^{\alpha,\alpha'}_{m,m'}=B^{\alpha',\alpha}_{m',m}$ the weight $B^{\alpha_{e^+},\alpha_{e^-}}_{2k_{e^+},2k_{e^-}}$ depends only on the (unoriented) edge $e$.


{\em Ordinary leaf weights}
$$W(\ell)= V^{\alpha_\ell}_{k_\ell}(p_\ell)$$
The dependence of $\omega_{g,n}$ on $p_i\in\Sigma$ occurs via $V^{\alpha}_{k}(p)$ which is defined in \eqref{Vdiff}.

{\em Dilaton leaf weights}
$$W(\lambda)= \left\{\begin{array}{ll}\displaystyle\frac{1}{y_{1,\alpha_{\lambda}}}(2k_{\lambda}+1)!!y_{2k_{\lambda}+1,\alpha_{\lambda}},&v_{\lambda}\in V^{\text{reg}}\\
\displaystyle\frac{1}{y_{-1,\alpha_{\lambda}}}(2k_{\lambda}+1)!!y_{2k_{\lambda}+1,\alpha_{\lambda}},&v_{\lambda}\in V^{\text{irreg}}
\end{array}\right.
$$
where $y_{2k+1,\alpha}$ are the odd coefficients of the local expansion $y=\sum y_{k,\alpha}z^k$ with respect to the local coordinate $z$ defined by $x=\frac12 z^2+x(\cp_{\alpha})$.
\end{definition}

The recursive structure \eqref{TRrec} can be encoded in graphs \cite{EOrInv} with edges decorated by $\omega_{0,1}(p)$, $\omega_{0,2}(p,p')$ and $K(p,p')$ obtained by applying \eqref{TRrec} repeatedly which corresponds to the construction of a surface of type $(g,n)$ via recursively attaching $2g-2+n$ pairs of  pants.  This enables one to express $\omega_{g,n}$ as a weighted sum over graphs with $2g-2+n$ vertices.

\begin{theorem} \label{graphexp}
For a spectral curve $(\Sigma,B,x,y)$ and $2g-2+n>0$,
$$\omega_{g,n}(p_1,...,p_n)=\frac{1}{n!}\sum_{\gamma\in\Gamma_{g,n}}\prod_{\ v\in V(\gamma)}W(v)\prod_{e\in E(\gamma)}W(e)\prod_{\ell\in L^*(\gamma)}W(\ell)\prod_{\lambda\in L^\bullet(\gamma)}W(\lambda)
$$
with weights $W$ defined in Definition~\ref{weights}.
\end{theorem}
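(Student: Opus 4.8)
The plan is to mirror the resummation of the topological recursion carried out in \cite{DOSSIde,EynInv} for regular spectral curves, the new ingredient being the use of Proposition~\ref{th:defbes} at the irregular zeros of $dx$ alongside Proposition~\ref{th:defair} at the regular ones. First I would invoke the graphical encoding of \eqref{TRrec} from \cite{EOrInv}: iterating the recursion writes $\omega_{g,n}(p_1,\dots,p_n)$ as a finite sum over connected trivalent graphs of genus $g$ with $n$ external legs, whose internal edges carry copies of $K$ or of $\omega_{0,2}=B$, whose external legs carry $B(\,\cdot\,,p_i)$, and whose remaining legs carry $\omega_{0,1}=-y\,dx$, all of these being evaluated through the residue operations at the zeros $\cp_\alpha$ of $dx$ prescribed by \eqref{TRrec}. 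Since those operations see only local data, I would fix at each $\cp_\alpha$ the coordinate $z$ with $x=\tfrac12 z^2+x(\cp_\alpha)$ and record the local expansions $y=\sum_k y_{k,\alpha}z^k$, $B=\delta_{\alpha\alpha'}\tfrac{dz\,dz'}{(z-z')^2}+\sum_{m,m'}B^{\alpha\alpha'}_{m,m'}z^m z'^{m'}dz\,dz'$, together with the auxiliary differentials $V^\alpha_k$ of \eqref{Vdiff}, whose principal part at $\cp_\alpha$ is $(2k+1)!!\,dz/z^{2k+2}$; only these expansions enter the residues.

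The heart of the argument is to regroup this trivalent sum. Split $B=B_0+\Omega$, where $B_0$ is the block-diagonal kernel retaining only the $\tfrac{dz\,dz'}{(z-z')^2}$ part at each $\cp_\alpha$, so that $\Omega=\sum B^{\alpha\alpha'}_{m,m'}z^m z'^{m'}dz\,dz'$ is holomorphic near every zero of $dx$ (it occurs both in the $\omega_{0,2}$-edges and inside the kernels $K$). I would keep the $B_0$-part inside every kernel and contract, into a single vertex $v$, all of the local data --- trivalent vertices, $B_0$-edges, $B_0$-pieces of kernels, and $\omega_{0,1}$-legs --- attached at a given $\cp_\alpha$ and not separated by an occurrence of $\Omega$. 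By construction the weight of such a $v$ of genus $g_v$ and valence $n_v$ is precisely the $(g_v,n_v)$ correlator of the one-branch-point spectral curve $x=\tfrac12 z^2$, $y=\sum_k y_{k,\alpha}z^k$, $B=B_0$, which by Proposition~\ref{th:defair} (for $\cp_\alpha$ regular) or Proposition~\ref{th:defbes} (for $\cp_\alpha$ irregular) equals $y_{\pm1,\alpha}^{2-2g_v-n_v}$ times $a_{g_v,n_v}$, respectively $b_{g_v,n_v}$, of the half-edge indices, dressed by dilaton leaves of weight $\tfrac{1}{y_{\pm1,\alpha}}(2k+1)!!\,y_{2k+1,\alpha}$ --- exactly the vertex and dilaton-leaf weights of Definition~\ref{weights}. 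After the contraction only $\Omega$-edges survive; from each of them, extracting the two residues against the leading parts $(2k+1)!!\,dz/z^{2k+2}$ of the $V^\alpha_k$ meeting its endpoints yields the edge weight $\hbar(2k_{e^+}-1)!!(2k_{e^-}-1)!!\,B^{\alpha_{e^+}\alpha_{e^-}}_{2k_{e^+},2k_{e^-}}$, while the external insertions $B(\,\cdot\,,p_i)$ become the ordinary leaf weights $V^{\alpha_\ell}_{k_\ell}(p_\ell)$. Summing over all ways to attach $p_1,\dots,p_n$ to the leaves and over all index labels, and dividing by $n!$ to pass from labelled graphs to the symmetric correlator, gives the stated formula; the total power of $\hbar$ is $\sum_v(g_v-1)+|E(\gamma)|=\sum_v g_v+b_1(\gamma)-1=g-1$ as required.

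I expect the real work to lie entirely in this bookkeeping rather than in any conceptual difficulty: one must verify that collapsing the $B_0$-clusters reproduces the single-branch-point correlators with no over- or under-counting, that the double-factorial normalisations of the $V^\alpha_k$ (and the attendant index shifts) are correctly apportioned between vertices, edges and leaves, and that the dilaton leaves produced by the subleading $y_{2k+1,\alpha}$ in Propositions~\ref{th:defair}--\ref{th:defbes} are not double-counted as $\Omega$-edges. All multiplicative constants can be pinned down by comparing $(g,n)=(0,3)$ and $(1,1)$ directly with \eqref{TRrec}; once this is done, the uniform treatment of regular and irregular vertices afforded by Proposition~\ref{th:defbes} reduces the remaining argument to the one already given for the regular case in \cite{DOSSIde}. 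Equivalently, one could run this as an induction on $2g-2+n$: classify the graphs by the vertex carrying the leaf $p_1$, and check that summing over its local data reconstructs one step of \eqref{TRrec} with the inductive hypothesis substituted for the smaller correlators.
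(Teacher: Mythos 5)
Your proposal is correct and follows essentially the same route as the paper: the paper's proof simply cites the regular case from \cite{DOSSIde,EynInv}, observes that the contributions of $B$ to the graphs are independent of those of $y$, and concludes that only the vertex and dilaton-leaf weights at irregular zeros need changing, with those new weights read off from Proposition~\ref{th:defbes} — exactly the strategy you describe. The extra detail you supply (the $B=B_0+\Omega$ splitting, the contraction of $B_0$-clusters into single-branch-point correlators, and the $\hbar$-bookkeeping) is the content of the cited regular-case proof rather than anything new.
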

\begin{proof}
The regular case of this theorem is proven in \cite{DOSSIde} and \cite{EynInv}.  Graphs arise in the expression for $\omega_{g,n}$, as originally described in \cite{EOrInv}, as a means of encoding occurrences of the kernels $K(z_1,z_2)$ and $B(z_1,z_2)$ in formulae for the correlators obtained by iterating the recursion \ref{TRrec}.  Importantly the contributions to graphs by $B(p,p')$ are independent of the contributions to graphs by $y$.  Hence the proof of Theorem~\ref{graphexp} in \cite{DOSSIde} adapts immediately to allow irregular vertices, with changes only to vertex weights, and corresponding dilaton leaf weights, for vertices corresponding to irregular zeros of $dx$.  The irregular vertex weights are determined from Proposition~\ref{th:defbes}.
\end{proof}
\begin{remark}
Although the main theorem in \cite{DOSSIde} requires a finiteness assumption on the spectral curve, Theorem~\ref{graphexp} is proven there in full generality for any regular spectral curve.  The theorem is also proven for any regular spectral curve in \cite{EynInv} Proposition 4.1, where the graphical sum is replaced by a moduli space of $D$-coloured stable Riemann surfaces.
\end{remark}

The graphical expansion in Theorem~\ref{graphexp} of the correlators $\omega_{g,n}$ of $S$ can be restated in terms of differential operators acting on the partition function $Z^S$ leading to a proof of the main result Theorem~\ref{th:main}.  The function $R(z)$ and operator $\hat{R}$ below are obtained from \cite{DOSSIde}.

\begin{definition} \label{GivR}
Given a spectral curve $S=(\Sigma,B,x,y)$ define $R(z)=\sum R_kz^k\in\text{End}(V)[[z]]$ for a dimension $D(=$ number of zeros of $dx$) vector space $V$ by
$$\left[R^{-1}(z)\right]^\alpha_\beta = -\frac{\sqrt{z}}{\sqrt{2\pi}}\int_{\Gamma_\beta} B(\cp_\alpha,p)\cdot e^{\frac{(x(\cp_\beta)-x(p))}{ z}}$$
where $\Gamma_\beta$ is a path of steepest descent of $-x(p)/z$ containing $\beta$,
and define the sequence $r_k\in\text{End}(V)$ by $R(z)=\displaystyle\exp(\sum_{\ell>0} r_\ell z^\ell)$.
\end{definition}
\begin{definition} \label{ops}
Given a spectral curve $S=(\Sigma,B,x,y)$ define
\begin{align*}
\hat{R}&=\exp\left\{\sum_{\ell=1}^{\infty}\sum_{\alpha,\beta}
\left(\sum_{k=0}^{\infty}v^{k,\beta}(r_k)_\beta^\alpha\frac{\partial}{\partial v^{k+\ell,\alpha}}+\frac{\hbar}{2}\sum_{m=0}^{\ell-1}(-1)^{m+1}(r_\ell)^{\alpha}_{\beta}\frac{\partial^2}{\partial v^{m,\alpha}\partial v^{\ell-m-1,\beta}}\right)
\right\}\\
\hat{T}&=\exp\left(-\sum_{\alpha=1}^m\displaystyle\sum_{k>0} (2k-1)!!\frac{y_{2k-1,\alpha}}{y_{-1,\alpha}}\frac{\partial}{\partial v^{k,\alpha}}-\sum_{\alpha=m+1}^D\displaystyle\sum_{k>1} (2k-1)!!\frac{y_{2k-1,\alpha}}{y_{1,\alpha}}\frac{\partial}{\partial v^{k,\alpha}}\right)\\
\hat{\Delta}&\quad \text{acts on the $\alpha$th factor $Z^{\text{BGW}}$ by\ } \hbar\mapsto=y_{-1,\alpha}^{-2}\hbar \ (\text{or\ }y_{1,\alpha}^{-2}\hbar, \text{for\ }Z^{\text{KW}})
\end{align*}
We have assumed that $y$ is irregular at $\cp_\alpha$ for $\alpha=1,...,m$ and regular otherwise.  Here $r_k$ is defined in Definition~\ref{GivR} and $y_{2k-1,\alpha}$ are the odd coefficients of the local expansion $y=\sum y_{j,\alpha}z^j$ with respect to the local coordinate $z$ defined by $x=\frac12 z^2+x(\cp_{\alpha})$.
\end{definition}

The following restatement of Propositions~\ref{th:defair} and \ref{th:defbes} is a special case of Theorem~\ref{th:main} and in fact is used in the proof of Theorem~\ref{th:main}.
\begin{proposition}  \label{th:trans}
The partition function of the spectral curve
$$
S=(\Sigma,B,x,y)=(\bp^1,\frac{dzdz'}{(z-z')^2},\frac{1}{2}z^2,\displaystyle\sum_{k=-1}^{\infty}y_kz^k)
$$
is obtained via translation of the appropriate tau function:
$$Z^S=\left\{\begin{array}{ll}Z^{\text{BGW}}(y_{-1}^{-2}\hbar,\{y_{-1}^{-1}v^{0,1},y_{-1}^{-1}v^{k,1}-(2k-1)!!\frac{y_{2k-1}}{y_{-1}},k>0\}),&y_{-1}\neq 0\\
Z^{\text{KW}}(y_{1}^{-2}\hbar,\{y_{-1}^{-1}v^{0,1},y_{-1}^{-1}v^{1,1},y_{1}^{-1}v^{k,1}-(2k-1)!!\frac{y_{2k-1}}{y_{1}},k>1\}),&y_{-1}=0,\ y_1\neq 0.
\end{array}\right.
$$
or equivalently
\begin{equation}  \label{eq:trans}
Z^S=\hat{T}\hat{\Delta}Z^{\text{BGW}}(\hbar,\{v^{k,1}\})\quad {\text or}\quad\hat{T}\hat{\Delta}Z^{\text{KW}}(\hbar,\{v^{k,1}\})
\end{equation}
for
$$ \hat{T}=\exp\left(-\displaystyle\sum_{k>\epsilon} (2k-1)!!\frac{y_{2k-1}}{y_{\text min}}\partial_{v^{k,1}}\right),\ 
\hat{\Delta}(\hbar,\{v^{k,1}\})=(y_{\text min}^{-2}\hbar,\{y_{\text min}^{-1}v^{k,1}\}),\quad (y_{\text min},\epsilon)=(y_{-1},0) {\text\ \ or\ \ }(y_1,1).
$$
\end{proposition}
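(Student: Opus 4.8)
As the surrounding text indicates, this proposition is really a reformulation of Propositions~\ref{th:defair} and \ref{th:defbes}: those already express the correlators of the deformed Airy and Bessel curves as explicit polynomials in the deformation coefficients $y_k$ with coefficients built from $a_{g,n}$, resp.\ $b_{g,n}$, so the remaining task is to package the associated generating series as a translated, rescaled $Z^{\text{KW}}$, resp.\ $Z^{\text{BGW}}$. The plan is thus essentially a bookkeeping one.

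Take first the irregular case $y_{-1}\neq0$. For the curve $x=\frac12 z^2$, $B=dz\,dz'/(z-z')^2$ the auxiliary differentials of Definition~\ref{auxdif} are $V^k(z)=(2k+1)!!\,dz/z^{2k+2}$, so the substitution $V^k_{k_i}(p_i)=v^{k_i,1}$ defining $Z^S$ in \eqref{eq:part} replaces each monomial $dz_i/z_i^{d_i+1}$ occurring in $\omega_{g,n}$ (necessarily with $d_i=2k_i+1$ odd) by $v^{k_i,1}/d_i!!$. Feeding in formula \eqref{defbes} and writing $\alpha_k=2\kappa_k+1$, so that $\alpha_k-2=2\kappa_k-1$ and $1/\alpha_k=(2\kappa_k-1)!!/(2\kappa_k+1)!!$, gives an explicit polynomial expression for $\log Z^S=\sum_{g,n}\hbar^{g-1}\frac1{n!}\,\omega_{g,n}|_{\mathrm{subst}}$ in the $v^{k,1}$. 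On the other side, recall from Section~\ref{sec:BGW} the identity $\log Z^{\text{BGW}}(\hbar,\{v\})=\sum_{g,N}\hbar^{g-1}\frac1{N!}\sum_{\vec k}\frac{b_{g,N}(2\vec k+1)}{\prod(2k_i+1)!!}\prod v^{k_i,1}$. Performing the substitution $\hbar\mapsto y_{-1}^{-2}\hbar$, $v^{k,1}\mapsto y_{-1}^{-1}v^{k,1}-\delta_{k>0}(2k-1)!!\,y_{2k-1}/y_{-1}$ of the statement: the rescaling of $\hbar$ together with a factor $y_{-1}^{-1}$ on each of the $N$ variables produces the prefactor $y_{-1}^{2-2g-N}$; expanding $\prod_i\bigl(v^{k_i,1}-\delta_{k_i>0}(2k_i-1)!!\,y_{2k_i-1}\bigr)$ and collecting the part of total $v$-degree $n$, with the remaining $m=N-n$ factors supplying the translation pieces, the multinomial identity $\binom{N}{n}/N!=1/(n!\,m!)$ reorganises the double sum into precisely the expression obtained above from \eqref{defbes}. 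Exponentiating gives the first line of the displayed formula, equivalently $\hat T\hat\Delta Z^{\text{BGW}}$.

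The regular case $y_{-1}=0$, $y_1\neq0$ is word for word the same, using Proposition~\ref{th:defair} in place of Proposition~\ref{th:defbes}, $a_{g,n}$ in place of $b_{g,n}$, $Z^{\text{KW}}$ in place of $Z^{\text{BGW}}$, and $y_1=y_{\min}$ in place of $y_{-1}$; the constraint $\alpha_k>3$ now means that only $v^{k,1}$ with $k>1$ get translated (so $\epsilon=1$), while $v^{0,1}$ and $v^{1,1}$ are merely rescaled.

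The only step with genuine content is the matching of the two sides: one must keep track of which arguments of the symmetric coefficient $b_{g,n+m}$ (resp.\ $a_{g,n+m}$) carry a surviving $v$-variable and which carry a translated-out $y$, check that the automorphism factor $1/|\mathrm{Aut}(\alpha)|$ appearing in the proof of Proposition~\ref{th:defbes} is exactly what collapses to $1/(n!\,m!)$ once one sums over all $m$-tuples, and verify that the powers of $y_{\min}$ agree on both sides. Two harmless points: the unstable correlators $\omega_{0,1},\omega_{0,2}$ never enter (both $Z^S$ and Propositions~\ref{th:defair}–\ref{th:defbes} are taken over $2g-2+n>0$), and $v^{0,1}$ is never translated, consistent with $y_0$ not appearing in the kernel. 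Finally, since \eqref{defbes} is a finite sum ($m\le g-1$) and \eqref{defair} is finite by its degree bound, $\hat T$ acts as a locally finite operator and no convergence issue arises.
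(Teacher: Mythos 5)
Your proposal is correct and is essentially the paper's argument: the proof given there also reduces everything to Propositions~\ref{th:defair} and \ref{th:defbes}, merely phrasing the bookkeeping graphically (the translated-out arguments of $a_{g,n+m}$, $b_{g,n+m}$ are the ``dilaton leaves'' attached to each vertex, and adding such leaves with weight $(2i-1)!!\,y_{2i-1}/y_{\min}$ is the graphical realisation of the exponential of a constant vector field, i.e.\ of $\hat T$). Your explicit series manipulation, including the $\binom{N}{n}/N!=1/(n!\,m!)$ collapse of the automorphism factors and the matching of powers of $y_{\min}$, is exactly the content hidden in that graphical statement.
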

\begin{proof}
Propositions~\ref{th:defair} and \ref{th:defbes} can be expressed graphically by adding dilaton leaves to each vertex.  The weight of a dilaton leaf with label $i$ is $(2i-1)!!\frac{y_{2i-1}}{y_1}$ at a  regular vertex and $(2i-1)!!\frac{y_{2i-1}}{y_{-1}}$ at an irregular vertex.  But this is also the graphical realisation of translation given by the exponential of a constant vector field.
\end{proof}
\begin{remark}
The regular case of Proposition~\ref{th:trans} is a consequence of work of Manin and Zograf \cite{MZoInv} and Eynard \cite{EynRec} as follows.  Consider
$$F^{\text{KW}}(\hbar,t_0,t_1,...)=\sum_{g,n}\hbar^{g-1}\frac{1}{n!}\sum_{\vec{k}\in\bn^n}\int_{\overline{\modm}_{g,n}}\prod_{i=1}^n\psi_i^{k_i}t_{k_i}
$$
and define a generating function for higher Weil-Petersson volumes
$$F^{\kappa}(\hbar,\vec{t},\vec{s})=\sum_{g,n}\hbar^{g-1}\frac{1}{n!}\sum_{\vec{k}\in\bn^n}\int_{\overline{\modm}_{g,n}}\prod_{i=1}^n\psi_i^{k_i}t_{k_i}\prod_{j=1}^\infty\kappa_j^{m_j}\frac{s_j^{m_j}}{m_j!}.
$$
Manin and Zograf \cite{MZoInv} proved that $F^{\kappa}$ is a translation of $F^{\text{KW}}$
$$F^{\kappa}(\hbar,\vec{t},\vec{s})=F^{\text{KW}}(\hbar,t_0,t_1,t_2+p_1(\vec{s}),...,t_k+p_{k-1}(\vec{s}),...)
$$
where the $p_j$ are (weighted) homogeneous polynomials of degree $j$ defined by
$$ 1-\exp\left(-\sum_{i=1}^\infty s_iz^i\right)=\sum_{j=1}^\infty p_j(s_1,...,s_j)z^j.$$
Eynard \cite{EynRec} proved the same relation between higher Weil-Petersson volumes and topological recursion.  Associate to $y(z)=\displaystyle\sum_{k=1}^{\infty}y_kz^k$ (its Laplace transform)
$$\cl(f)(z)=\frac{1}{y_1}\sum_{k=1}^\infty (2k+1)!!y_{2k+1}z^k=-\sum_{j=1}^\infty p_j(s_1,...,s_j)z^j$$
which defines $s_i$ as a function of $y_{2k+1}$.
Then for $S=(\frac{1}{2}z^2,y(z),\frac{dzdz'}{(z-z')^2})$ he proves $F^S=F^{\kappa}(\hbar,\vec{t},\vec{s})$.  In other words,
$$F^S=F^{\text{KW}}(\hbar,t_0,t_1,t_2-3!!\frac{y_3}{y_1},...,t_{k}-(2k-1)!!\frac{y_{2k-1}}{y_1},...)
$$
which is the regular case of Proposition~\ref{th:trans}.
\end{remark}

\begin{proof}[Proof of Theorem~\ref{th:main}]
The graphical expansion in Theorem~\ref{graphexp} corresponds to a Feynman expansion of the action of the exponential of a differential operator with quadratic and linear terms on a product of $D$ functions of single variables.  This viewpoint, i.e. the determination of the differential operators corresponding to the weighted graphical expressions is dealt with thoroughly in \cite{DSSGiv}.  The weights give coefficients of differential operators.  The factor $\prod_{v\in V(\gamma)}W(v)$ corresponds to the product $Z^{\text{BGW}}(\hbar,\{v^{k,1}\})\cdots  Z^{\text{KW}}(\hbar,\{v^{k,D}\})$ of tau functions, the factor $\prod_{e\in E(\gamma)}W(e)$ corresponds to quadratric terms in the differential operator,  the factor $\prod_{\ell\in L^*(\gamma)}W(\ell)$ corresponds to terms $t_j\frac{\partial}{\partial t_k}$ in the differential operator, and the factor $\prod_{\lambda\in L^\bullet(\gamma)}W(\lambda)$ corresponds to translations.

It is crucial here that the definition of $\hat{R}$ depends only on $(\Sigma,B,x)$ which is independent of $y$ and hence also of the property of the spectral curve being regular or irregular.  This allows us to partly follow the proof of the regular case in \cite{DOSSIde} since the same operator $\hat{R}$ is being used.  In \cite{DSSGiv} the equality
\begin{align*}
\exp\left\{\sum_{\ell=1}^{\infty}\sum_{\alpha,\beta}
\left(\sum_{k=0}^{\infty}v^{k,\beta}(r_k)_\beta^\alpha\frac{\partial}{\partial v^{k+\ell,\alpha}}+\frac{\hbar}{2}\sum_{m=0}^{\ell-1}(-1)^{m+1}(r_\ell)^{\alpha}_{\beta}\frac{\partial^2}{\partial v^{m,\alpha}\partial v^{\ell-m-1,\beta}}\right)
\right\}\qquad\qquad\qquad\qquad\\
=\exp\left\{\sum_{\ell=1}^{\infty}\sum_{\alpha,\beta}
\left(\sum_{k=0}^{\infty}v^{k,\beta}(r_k)_\beta^\alpha\frac{\partial}{\partial v^{k+\ell,\alpha}}\right)\right\}
\exp\left\{\frac{\hbar}{2}\sum_{k,\ell\geq 0}(2k-1)!!(2\ell-1)!!B^{\alpha,\beta}_{2k,2\ell}\frac{\partial^2}{\partial v^{k,\alpha}\partial v^{\ell,\beta}}\right\}
\end{align*}
is proven via 
the Campbell--Baker--Hausdorff formula.  The general form of the Campbell--Baker--Hausdorff identity that we need pertains to three differential operators with constant coefficient matrices $A$, $A'$, and $B$ (the matrices $A$ and $A'$ are symmetric): ${\mathcal A}=\bigl(\vec{\partial}A\vec{\partial}^{\text{T}}\bigr)$, ${\mathcal A'}=\bigl(\vec{\partial}A'\vec{\partial}^{\text{T}}\bigr)$, and ${\mathcal B}=\bigl(\vec{t}B\vec{\partial}^{\text{T}}\bigr)$ with derivatives acting to the right.  In particular, $[{\mathcal A},[{\mathcal A},{\mathcal B}]]=0$.  Then 
$$
e^{\mathcal B+\mathcal A}=e^{\mathcal B}e^{\mathcal A'}\ \ \hbox{provided}\ \ B^{\text{T}}A'+A'B=A-e^{-B^{\text{T}}}A\,e^{-B},
$$
or equivalently
$$
A'=\int_{0}^1dx\,e^{-xB^{\text{T}}}A\,e^{-xB}.
$$
Via the action of (constant coefficient) quadratic differential operators expressed graphically (also described well in \cite{DSSGiv}) this gives precisely the correct edge weights yielding the operator $\hat{R}$ in \eqref{eq:main}.

The translation term $\hat{T}$ in \eqref{eq:main} follows immediately from Propositions~\ref{th:defair}, \ref{th:defbes} and \ref{th:trans}.  This is a modification to the proof in \cite{DOSSIde} in two ways.  Firstly, the proof in \cite{DOSSIde} uses an operator $\hat{R}_0=\hat{R}\hat{T}_0$ where $\hat{T}_0$ is the translation arising out of a very special choice of $y$---see below for a discussion of this point.  Secondly, here we needed Proposition~\ref{th:defbes} which resulted in terms given by translations of the tau function $Z^{\text{BGW}}$ in addition to translations of the tau function $Z^{\text{KW}}$.
\end{proof}
Note that related decompositions for matrix models appears in \cite{AMMBGW}.
\begin{example}   \label{exspec}
The four decompositions of partition functions \ref{examples} follow from Theorem~\ref{th:main} applied to the following spectral curves.  Each of the spectral curves is rational with common $x$ and $B$ given by the Cauchy kernel:
$$ x=z+\frac{1}{z},\quad  B=\frac{dzdz'}{(z-z')^2}.$$
In particular, the operator $\hat{R}$ is the same in all four examples.  The examples differ by their choice of $y$:
\begin{itemize}
\item $Z^{\text{GW}}$ uses $y=\ln{z}$ --- \cite{DOSSIde,NScGro}.
\item  $Z^{\text{GUE}}$ uses $y=z$ --- \cite{EynTop}.
\item $Z^{\text{Leg}}$ uses $y=\frac{z}{z^2-1}$ --- Section~\ref{sec:legendre}.
\item $Z^{\text{Des}}$ uses $y=\frac{z}{z+1}$ --- \cite{DNoTop}.
\end{itemize}
\end{example}

Theorem~\ref{th:main} generalises a result for regular spectral curves in \cite{DOSSIde} and its proof is modeled on that result.  We end this section with a comparison of the two decomposition formulae applied to regular spectral curves.

\begin{definition}  \label{yfinite}
Given a regular spectral curve $S=(\Sigma,B,x,y)$ define $Y_{\alpha}$ locally in a neighbourhood of a zero $\cp_\alpha$ of $dx$ by the property that it satisfies
$$\frac{\sqrt\zeta}{\sqrt {2\pi}}\int_{\Gamma_\alpha} {dY_{\alpha}}(p)
e^{(x(p)-x(\cp_\alpha))\zeta}
= \sum_{\beta=1}^D dy(\cp_{\beta}) \cdot \frac{-1}{\sqrt {2\pi\zeta}} \int_{\Gamma_\alpha} B(p,\cp_\alpha)e^{(x(p)-x(\cp_\alpha))\zeta}.
$$
The function $Y_{\alpha}$ is well-defined only up to addition of a function of $x$, because the Laplace transform annihilates functions of $x$, but this ambiguity disappears in the odd coefficients of the local expansion of $Y_{\alpha}$ which is all that is needed in the sequel.
\end{definition}
The construction of \cite{DOSSIde} begins with Givental's decomposition of the partition function of (the correlators of) a cohomological field theory (with flat identity) and produces a spectral curve satisfying the restriction that $y=Y_{\alpha}$ for each $\alpha=1,...,D$ (where $y=Y_{\alpha}$ up to local functions of $x$).
\begin{theorem}[\cite{DOSSIde}]
Given a regular spectral curve $S=(\Sigma,B,x,y)$ satisfying $y=Y_{\alpha}$ for each $\alpha=1,...,D$ the partition function $Z^S$ satisfies the decomposition \eqref{eq:main}.
\end{theorem}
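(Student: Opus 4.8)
The plan is to read the statement as the special case $m=0$ of Theorem~\ref{th:main}. First I would note that a \emph{regular} spectral curve has, by definition, no irregular zeros of $dx$, so in the notation of Theorem~\ref{th:main} we have $m=0$ and the right-hand side of \eqref{eq:main} reduces to $\hat R\hat T\hat\Delta\, Z^{\text{KW}}(\hbar,\{v^{k,1}\})\cdots Z^{\text{KW}}(\hbar,\{v^{k,D}\})$, with no $Z^{\text{BGW}}$ factors. Theorem~\ref{th:main} already asserts exactly this identity for an arbitrary spectral curve whose $dx$ has simple zeros disjoint from the zeros of $dy$, so the decomposition \eqref{eq:main} holds verbatim; in particular the hypothesis $y=Y_\alpha$ is not actually needed to produce \eqref{eq:main} as written. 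Thus the only real content of the statement is that under the hypothesis $y=Y_\alpha$ the decomposition matches the one of \cite{DOSSIde} in the form stated there.

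The second step is therefore to reconcile the operators. In \cite{DOSSIde} the translation is absorbed into the $R$-matrix, i.e.\ one works with $\hat R_0=\hat R\hat T_0$ where $\hat T_0$ is the translation attached to the distinguished choice of $y$ satisfying $y=Y_\alpha$ (cf.\ the proof of Theorem~\ref{th:main}). By Definition~\ref{ops}, with $m=0$, the translation operator is $\hat T=\exp\!\bigl(-\sum_{\alpha=1}^D\sum_{k>1}(2k-1)!!\,\tfrac{y_{2k-1,\alpha}}{y_{1,\alpha}}\,\partial_{v^{k,\alpha}}\bigr)$, so it is determined entirely by the odd Taylor coefficients $y_{2k-1,\alpha}$ of $y$ in the local coordinate $z$ with $x=\tfrac12 z^2+x(\cp_\alpha)$. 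Using the Laplace-transform characterisation of $Y_\alpha$ in Definition~\ref{yfinite} together with the definition of $R(z)$ in Definition~\ref{GivR}, I would check that imposing $y=Y_\alpha$ for every $\alpha$ forces exactly the relation between the coefficients $y_{2k-1,\alpha}$ and the entries of $R(z)$ (equivalently, expresses the $y_{2k-1,\alpha}$ in terms of $(\Sigma,B,x)$ and the numbers $dy(\cp_\beta)$) which identifies $\hat T$ with the $\hat T_0$ implicit in $\hat R_0$. This is the branch-point-by-branch-point version of the computation underlying Proposition~\ref{th:trans}. Once $\hat T=\hat T_0$, the Feynman-graph bookkeeping of \cite{DSSGiv,DOSSIde} already invoked in the proof of Theorem~\ref{th:main} shows that $\hat R\hat T\hat\Delta=\hat R_0\hat\Delta$ reproduces \cite{DOSSIde}'s decomposition.

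The main obstacle is precisely this identification of translations: one must match the dilaton-leaf weights $(2k-1)!!\,y_{2k-1,\alpha}/y_{1,\alpha}$ of Definition~\ref{weights} with the data that \cite{DOSSIde} feeds into its $R$-matrix, that is, show that the hypothesis $y=Y_\alpha$ collapses the extra freedom in $y$ (beyond the leading numbers $\eta_\alpha=dy(\cp_\alpha)^2$) exactly along the directions annihilated by the Laplace transform in Definition~\ref{yfinite}, and only those. Everything else—the reduction to $m=0$, the rescaling $\hat\Delta$, and the quadratic part $\hat R$ which depends only on $(\Sigma,B,x)$—is immediate from Theorem~\ref{th:main}. (One could alternatively bypass Theorem~\ref{th:main} and argue as in \cite{DOSSIde} directly, starting from Givental's decomposition of the associated cohomological field theory and matching correlators, but that would be redundant here.)
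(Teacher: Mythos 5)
Your proposal is correct, but it inverts the logical direction the paper takes. The paper does not prove this statement from Theorem~\ref{th:main}: it imports it from \cite{DOSSIde} (whose result runs from a Givental-decomposed cohomological field theory to a spectral curve satisfying $y=Y_\alpha$, and is remarked to be reversible on such curves, citing \cite{DNOPSDub}), and then merely records that the translation is hidden inside \cite{DOSSIde}'s operator $\hat R_0=\hat R\hat T$. You instead obtain the decomposition \eqref{eq:main} as the $m=0$ specialisation of Theorem~\ref{th:main} --- which is legitimate and not circular, since Theorem~\ref{th:main} is proved independently via the graphical expansion of Theorem~\ref{graphexp} and the Campbell--Baker--Hausdorff manipulation --- and you correctly isolate the one nontrivial point: the hypothesis $y=Y_\alpha$ is irrelevant to the literal claim that \eqref{eq:main} holds, and serves only to identify $\hat T$ with the translation $\hat T_0$ implicit in $\hat R_0$. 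That identification (matching the odd coefficients $y_{2k-1,\alpha}$ forced by the Laplace-transform condition of Definition~\ref{yfinite} with the data \cite{DOSSIde} feeds into its $R$-matrix) is exactly the content of the paper's closing discussion, so the two routes converge on the same key step. What your route buys is self-containedness within this paper's framework; what the paper's route buys is the attribution --- the statement really is \cite{DOSSIde}'s theorem restated, rather than a corollary of the new result --- together with the interpretation, recorded in the subsequent Corollary, that such $Z^S$ are partition functions of cohomological field theories.
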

The statement of this theorem is the reverse of the result in \cite{DOSSIde}, but it is easily seen to be reversible on spectral curves satisfying the condition on $y$, \cite{DNOPSDub}.  The translation due to local expansions of $y$ is implicit in the statement in \cite{DOSSIde} and appears inside an operator
$$\hat{R}_0=\exp\left\{-(r_m)^\alpha_{\un}\frac{\partial}{\partial v^{m+1,\alpha}}+\sum_{k=0}^{\infty}v^{k,\alpha}(r_m)^\beta_\alpha\frac{\partial}{\partial v^{m+k,\beta}}+\frac{\hbar}{2}\sum_{i=0}^{m-1}(-1)^{i+1}(r_m)^{\alpha,\beta}\frac{\partial^2}{\partial v^{i,\alpha}\partial v^{m-i-1,\beta}}\right\}
$$
(which they denote by $\hat{R}$) where the vector $\un=\{dy(\cp(\alpha)\}$.  Raising of the indices of $r_m$ is required for endomorphisms with respect to a general basis, but in this paper we express $R$ in a basis (known as normalised canonical coordinates \cite{GivSem}) with respect to which the metric is the identity, so upper and lower indices are the same.  It is related to the operator here by $\hat{R}_0=\hat{R}\hat{T}$ for a translation $\hat{T}$.

This viewpoint allows us to interpret the decomposition \eqref{eq:main} in the regular case as Givental's decomposition of the partition function of a cohomological field theory, now without the restriction of flat identity.
\begin{corollary}
For any regular spectral curve $S=(\Sigma,B,x,y)$, $Z^S$ is a partition function for a cohomological field theory.
\end{corollary}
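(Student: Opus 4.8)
The plan is to read the result off Theorem~\ref{th:main} by recognising the three operators $\hat{R},\hat{T},\hat{\Delta}$ as the standard operations that build cohomological field theories. Since $S$ is regular we have $m=0$ in Theorem~\ref{th:main}, so
$$Z^S=\hat{R}\,\hat{T}\,\hat{\Delta}\,Z^{\text{KW}}(\hbar,\{v^{k,1}\})\cdots Z^{\text{KW}}(\hbar,\{v^{k,D}\}).$$
The first step is to identify the innermost factor: by \eqref{ztop} with $m=0$, $\hat{\Delta}\,\prod_{\alpha=1}^{D}Z^{\text{KW}}(\hbar,\{v^{k,\alpha}\})=Z^{S_0}$ is the partition function \eqref{partfn} of the semisimple two-dimensional topological field theory on $V=\bc^{D}$ with metric $\eta_\alpha=dy(\cp_\alpha)^2$ and idempotent basis. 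A 2D TFT is a cohomological field theory concentrated in cohomological degree $0$, so $\hat{\Delta}\,\prod_\alpha Z^{\text{KW}}=Z[\Omega^{(0)}]$ for a CohFT $\Omega^{(0)}$, which moreover has flat identity.

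Next I would realise $\hat{T}$ as Givental's translation action on CohFTs. By Definition~\ref{ops} (with $m=0$) the relevant translation in the $\alpha$th variable is $T_\alpha(z)=-\sum_{k>1}(2k-1)!!\,y_{1,\alpha}^{-1}\,y_{2k-1,\alpha}\,z^{k}$, which lies in $z^{2}\bc[[z]]$ and is well defined precisely because $y_{1,\alpha}=dy(\cp_\alpha)\neq 0$ (the zeros of $dx$ are disjoint from those of $dy$). Because $T(z)=O(z^{2})$, the translation action
$$\Omega^{T}_{g,n}(v_{1}\otimes\cdots\otimes v_{n})=\sum_{m\geq 0}\frac{1}{m!}\,(p_{m})_{*}\,\Omega_{g,n+m}\big(v_{1}\otimes\cdots\otimes v_{n}\otimes T(\psi)^{\otimes m}\big),$$
where $p_{m}\colon\overline{\modm}_{g,n+m}\to\overline{\modm}_{g,n}$ forgets the last $m$ marked points, is a finite sum by a dimension count on $\overline{\modm}_{g,n+m}$, again obeys the CohFT axioms, and its effect on partition functions is exactly the operator $\hat{T}$ --- see \cite{PPZRel}. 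Hence $\hat{T}\,Z[\Omega^{(0)}]=Z[\Omega^{(1)}]$ with $\Omega^{(1)}=(\Omega^{(0)})^{T}$ a CohFT. This is the one place the argument goes beyond \cite{DOSSIde}: there the restriction $y=Y_\alpha$ forces $T$ to be the special translation that preserves flat identity, whereas here $T$ is an arbitrary element of $z^{2}V[[z]]$, so $\Omega^{(1)}$ need not have flat identity.

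Finally, $\hat{R}$ is the quantisation of the $R$-matrix $R(z)$ of Definition~\ref{GivR}; since this is the same $R$ used in \cite{DOSSIde} one has $R(0)=\mathrm{Id}$ and the symplectic condition $R(z)\,R(-z)^{\mathrm{T}}=\mathrm{Id}$ with respect to $\eta$ (in the normalised canonical basis of \cite{GivSem} the metric is the identity, so the adjoint of $R$ is its transpose). Thus $R(z)$ is an element of Givental's group, and the action of its quantisation $\hat{R}$ on the partition function of a CohFT is realised by the $R$-matrix action on CohFTs --- the sum over stable graphs with vertices decorated by $\Omega^{(1)}$ and edges and legs decorated via $R(z)$ --- whose output $\Omega^{(2)}$ is again a CohFT; this is Givental's reconstruction \cite{GivGro} in the form recalled and used in \cite{DOSSIde}. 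Therefore $Z^S=\hat{R}\,\hat{T}\,\hat{\Delta}\,\prod_\alpha Z^{\text{KW}}=Z[\Omega^{(2)}]$ is the partition function of a cohomological field theory.

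I expect the main obstacle to be bookkeeping rather than conceptual: one must check that the $\hat{R}$ and $\hat{T}$ delivered by Theorem~\ref{th:main} (equivalently by the graphical expansion of Theorem~\ref{graphexp}) agree on the nose with Givental's quantised $R$- and translation operators in the normalisation under which those are known to correspond to operations on CohFTs \cite{DSSGiv,DOSSIde,PPZRel}. For $\hat{R}$, and for the flat-identity translation, this matching is already carried out in \cite{DOSSIde}; what remains is the routine verification that a general $O(z^{2})$ translation acts on CohFTs as above and that the induced operator on partition functions is precisely the $\hat{T}$ of Definition~\ref{ops}.
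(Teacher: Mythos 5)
Your proposal is correct and follows essentially the same route as the paper, which states the corollary as an immediate consequence of recognising the decomposition of Theorem~\ref{th:main} (for $m=0$) as Givental's decomposition of a CohFT partition function, with $\hat{\Delta}\prod_\alpha Z^{\text{KW}}$ the semisimple TFT part, $\hat{T}$ the general $O(z^2)$ translation (dropping the flat-identity restriction of \cite{DOSSIde}, cf.\ \cite{PPZRel}), and $\hat{R}$ the quantised $R$-matrix action. Your write-up merely makes explicit the bookkeeping that the paper leaves implicit in the paragraph preceding the corollary.
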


\section{Legendre Ensemble}   \label{sec:legendre}

One application of Theorem~\ref{th:main} applied to the curve $(x^2-4)y^2=1$ is a Givental type decomposition for the partition function of the Legendre ensemble.  
\begin{proposition}  \label{th:leg}
Consider the Legendre ensemble with trivial potential
\begin{equation}  \label{legendre}
\int_{H_N[-2,2]}DM=\frac{1}{N!}\int_{-2}^2\int_{-2}^2...\int_{-2}^2\prod_{i<j}(\lambda_i-\lambda_j)^2d\lambda_1...d\lambda_N.
\end{equation}
The resolvents $W_{g,n}(x_1,...,x_n)$, defined via the large $N$ asymptotic expansion of the cumulant by
$$\left\langle {\rm tr} \left(\frac{1}{x_1-M}\right)\cdots {\rm tr} \left(\frac{1}{x_n-M}\right)\right\rangle^c\stackrel{N\to\infty}{\sim}\sum_{g\geq 0} N^{2-2g-n}W_{g,n}(x_1,...,x_n),$$
satisfy topological recursion for the spectral curve
\begin{equation}  \label{specur}
S=(\bp^1,\ B=\frac{dzdz'}{(z-z')^2},\ x=z+\frac{1}{z},\ y=\frac{z}{z^2-1}).
\end{equation}
In other words $W_{g,n}(x_1,...,x_n)dx_1...dx_n$ gives an expansion of  $\omega_{g,n}$ at $x_i=\infty$.
\end{proposition}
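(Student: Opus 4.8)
\emph{Proof proposal.} The strategy is the standard loop-equation route to topological recursion, the only genuinely new ingredient being the two hard walls at $\pm2$. I would argue in three stages: (i) identify the disk and cylinder resolvents, which pins down the spectral curve; (ii) derive the $1/N$ expansion of the connected cumulants and the loop equations they obey; (iii) reconstruct $\omega_{g,n}$ from the loop equations, in the form that allows $y$ a simple pole at a zero of $dx$.

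For (i): with trivial potential the equilibrium measure of \eqref{legendre} is the arcsine law $\dd\rho(\lambda)=\dd\lambda/(\pi\sqrt{4-\lambda^2})$ on $[-2,2]$, so $W_{0,1}(x)=\int_{-2}^2\dd\rho(\lambda)/(x-\lambda)=1/\sqrt{x^2-4}$. Under $x=z+1/z$ one has $\sqrt{x^2-4}=z-1/z$ and $\dd x=(z^2-1)z^{-2}\dd z$, hence $W_{0,1}(x)\,\dd x=\dd z/z$, which is $\mp\,y\,\dd x$ for $y=\pm z/(z^2-1)$; the sign is immaterial in \eqref{TRrec}, so $\omega_{0,1}$ is the one-form of the curve \eqref{specur}. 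Note $\dd x$ vanishes exactly at $z=\pm1$ and $y$ has simple poles there, so the edges $x=\pm2$ of the support are the irregular zeros of $dx$. For the cylinder term, the connected two-point resolvent of a one-cut model assembles, in the $z$-coordinate, into $\dd z_1\dd z_2/(z_1-z_2)^2-\dd x_1\dd x_2/(x_1-x_2)^2$, so $\omega_{0,2}=B$ as in \eqref{specur}. Thus the spectral curve attached to \eqref{legendre} is exactly $S$.

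For (ii): writing $W_n(x_1,\dots,x_n)=\langle\prod_i\tr(x_i-M)^{-1}\rangle^c$ and using invariance of \eqref{legendre} under the infinitesimal reparametrisation $\lambda_i\mapsto\lambda_i+\varepsilon\,\phi(\lambda_i)$, one obtains Schwinger--Dyson identities. Because the eigenvalues are confined to the compact interval, a generic $\phi$ contributes a flux through $\partial[-2,2]^N$; choosing $\phi$ that vanishes at the walls, e.g.\ $\phi(\lambda)=(4-\lambda^2)/(x-\lambda)$, removes this flux and leaves a closed hierarchy of loop equations for the $W_n$, the polynomial factor $4-\lambda^2$ bringing in only finitely many moments $\langle\tr M^k\rangle$ which enter as lower-order data. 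Off-criticality of the equilibrium problem away from the two hard edges, together with the known all-order expansions for analytic ensembles (or, concretely, the $1/N$ expansion of the Christoffel--Darboux kernel built from the rescaled Legendre polynomials, which are orthogonal for the weight $\mathbf 1_{[-2,2]}$), gives $W_n\sim\sum_g N^{2-2g-n}W_{g,n}$; substituting this in the hierarchy yields the linear and quadratic loop equations for $\omega_{g,n}=W_{g,n}\,\dd x_1\cdots\dd x_n$ with the $(g,n)=(0,2)$ diagonal correction.

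For (iii): pulled back to $\bp^1$ via $x=z+1/z$, each $\omega_{g,n}$ with $2g-2+n>0$ is rational, skew-invariant in its principal part under the involution $z_i\mapsto 1/z_i$ defined by $dx$, holomorphic at $z_i=0,\infty$ where $x=\infty$, and by the linear loop equation has poles only at the zeros $z=\pm1$ of $\dd x$, with singular part governed by the quadratic loop equation. This is precisely the input of the Eynard--Orantin reconstruction \cite{EOrInv}: since $\dd x$ has simple zeros the kernel $K(p_1,p)$ of \eqref{TRrec} is well defined even when $y$ has a simple pole at $z=\pm1$, and an induction on $2g-2+n$ shows that $\omega_{g,n}$ minus the topological-recursion correlator is a global rational differential with no poles, hence zero. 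I expect the main obstacle to be exactly this last step in the hard-edge regime: one must verify that the stronger pole of $\omega_{g,n}$ at $z=\pm1$ produced by the simple pole of $y$ is still compatible with the residue identity, i.e.\ that the loop equations at a hard wall coincide with the abstract linear and quadratic loop equations solved by irregular topological recursion \eqref{TRrec}. Granting this compatibility the induction is routine, and the proposition follows.
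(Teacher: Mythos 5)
Your proposal is correct and follows essentially the same route the paper intends (the paper in fact omits the details, describing the argument as ``rather standard, via the usual loop equations''): the equilibrium (arcsine) measure and the universal one-cut two-point function fix the spectral curve \eqref{specur}, boundary-adapted Schwinger--Dyson identities give the loop equations, and the Eynard--Orantin reconstruction in its irregular form yields \eqref{TRrec}. The one ingredient the paper explicitly singles out --- that the existence of the all-order $1/N$ expansion cannot be obtained from the loop equations alone but follows from Borot--Guionnet \cite{BGuAsy} --- is exactly the point you flag in stage (ii), though you should rest it on that reference rather than on off-criticality or orthogonal-polynomial heuristics.
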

Note that the existence of the asymptotic expansion $W_{n} = \sum_{g \geq 0} N^{2 - 2g - n} W_{g,n}$ is not trivial and cannot be obtained from the loop equations only, but it is a consequence of the results of Borot and Guionnet \cite{BGuAsy}.  We omit the proof of Proposition~\ref{th:leg} which is rather standard, via the usual loop equations.

A corollary of Theorem~\ref{th:main} is:
$$Z^{\text{Leg}}(\{v^{k,1},v^{k,2}\})=\hat{R}\hat{T}\hat{\Delta}Z^{\text{BGW}}(\{v^{k,1}\})\times Z^{\text{BGW}}(\{v^{k,2}\}).$$

We now consider this example closely and demonstrate how the decomposition \eqref{th:main} produces calculations via pictures realising the graphical treatment of Theorem~\ref{graphexp}.  We need only consider connected graphs to produce terms $F=\log Z$.  In the calculations via pictures we instead use a related decomposition with only edge and vertex weights leading to simpler pictures. It is  analogous to the decomposition proven by the first author in \cite{Ch95} and described next.

\subsection{Chekhov-Givental decomposition}

In \cite{Ch95} the first author obtained a decomposition for the Gaussian model that differs from \eqref{eq:main}.  As in \eqref{eq:main}, it consists of the exponential of a quadratic differential operator acting on two copies of $Z^{\text{KW}}$, with the added translation term present in \eqref{eq:main} but without a rotational term, i.e. it contains only terms of type ${\mathcal A}=\bigl(\vec{\partial}A\vec{\partial}^{\text{T}}\bigr)$ and no terms of type ${\mathcal B}=\bigl(\vec{t}B\vec{\partial}^{\text{T}}\bigr)$, whose absence was ensured by choosing global coordinates properly matching the local ones---see below.  It is equivalent to \eqref{eq:main} under a linear change of coordinates between $v^{k,\alpha}$ and  $\tau^{\pm}_k$ described in \eqref{chcoor}.  For the Legendre model which exhibits hard-edges, we present here an analogous decomposition almost identical to the Gaussian model decomposition in \cite{Ch95}. As in the proof of Theorem~\ref{th:main}, two ingredients of the decomposition arise from $B(p,q)$ -- the Bergman kernel -- and
from the 1-form $ydx$.

(i) $ydx$. We need $ydx$ in the global and local models.
\begin{itemize}
\item[(a)] Global model: $x=e^\lambda+e^{-\lambda}$, $y=1/(e^\lambda-e^{-\lambda})$, $ydx=d\lambda$, \  $\lambda\in \mathbb C$,\ $-\pi/2\le \text{Im\ }\lambda< 3\pi/2$

\item[(b)] Local models: $(x_1,y_1)$ and $(x_2,y_2)$ correspond to two branch points $b_1:\lambda=0$ and $b_2:\lambda=i\pi$.\\
$y_1^2x_1=1/4$, $x_1=\lambda^2$, $y_1dx_1=d\lambda$ for $\lambda\in D_1(0)$ a disk of radius 1 around $0\in\bc$;\\ $y_2^2x_2=1/4$, $x_2=(\lambda-i\pi)^2$, $y_2dx_2=d\lambda$ for $\lambda\in D_1(i\pi)$ a disk of radius 1 around $i\pi\in\bc$.

\end{itemize}
So, the global and local 1-forms coincide, $ydx^{\text{global}}=ydx^{\text{local}}=d\lambda$, which means that the linear differential part is absent in the Givental decomposition in this case.

(ii) $B(p(\lambda),q(\mu))$. We again have global and local $B$'s:
$$
B(p,q)^{\text{global}}=\frac{de^\lambda de^\mu}{(e^\lambda-e^\mu)^2},\qquad B(p,q)^{\text{local}}=\frac{d\lambda d\mu}{(\lambda-\mu)^2},
$$
defined in the corresponding domains $-\pi/2\le\text{Im\ }\{\lambda,\mu\}<3\pi/2$ and $\lambda,\mu\in D_1(0)$ for the first local model and $\lambda,\mu\in D_1(i\pi)$ for the second local model,
and their corresponding primitives 
$$
E(p(\lambda),q(\mu))^{\text{global}}=\log (e^\lambda-e^\mu),\qquad E(p(\lambda),q(\mu))^{\text{local}}=\log(\lambda-\mu)
$$
with the same domains of definition as for B's.
We then have that, for $\{\lambda,\mu\}\subset D_1(0)\cup D_1(i\pi)$, so that these points are in the vicinities of the corresponding branching points $b_\mu$ and $b_\lambda$, which can be either $b_1$ or $b_2$,
\begin{align*}
&B(p(\lambda),q(\mu))^{\text{global}}=\frac1{2\pi i} \int_{C_{p(\lambda)}}\frac1{2\pi i}\int_{C_{q(\mu)}} B(p(\lambda),\eta)^{\text{local}}E(\eta,\rho)^{\text{global}}B(\rho,q(\mu))^{\text{local}}\\
&=\delta_{b_\lambda,b_\mu} B(p(\lambda),q(\mu))^{\text{local}} +  \frac1{2\pi i}\int_{C_{p(\lambda)}}\frac1{2\pi i}\int_{C_{q(\mu)}} B(p(\lambda),\eta)^{\text{local}}(E(\eta,\mu)^{\text{global}}-\delta_{b_\lambda,b_\mu}E(\eta,\mu)^{\text{local}}) B(q(\mu),\eta)^{\text{local}}.
\end{align*}
In this expression, the integration contours $C_{p(\lambda)}$ and $C_{q(\mu)}$ are inside the corresponding discs $D_1(b_\lambda)$ and $D_1(b_\mu)$ and, if they are in the same disc, they must separate the points $p(\lambda)$ and $q(\mu)$, the first term in the right hand side describes the propagator of the local model, and the second term corresponds to
the quadratic differential operator, which we consider in details below.


The local times are $t^-_{k}=\frac {1}{\lambda^{2k+1}}$ for $\lambda\in D_1(0)$ and  $t^+_{k}=\frac {1}{(\lambda-i\pi)^{2k+1}}$ for $\lambda\in D_1(i\pi)$;
the corresponding global times are
\begin{equation} \label{t-global}
\tau^\mp_{k}=\frac 1 {(2k)!} \frac{\partial^{2k}}{\partial\lambda^{2k}} \frac{1}{\pm e^\lambda-1}.
\end{equation}
The times $\tau^\pm_{k}$ are related by a linear change of coordinates to the times $v^{\alpha}_k$ used in Theorem~\ref{th:main} as follows.   The global meromorphic differentials $d\tau^\pm_{k}(\lambda)$, defined on $\overline{\bc}$, have poles only at $e^{\lambda}=\pm1$ and are skew-invariant under the involution $\lambda\mapsto-\lambda$.  Hence they lie in the vector space spanned by the differentials $V^\alpha_j(p(\lambda))$ defined in \eqref{Vdiff} and $d\tau^\pm_{k}(\lambda)=\sum_{j,\alpha}  c^{\pm,j}_{\alpha,k}V^\alpha_j(p(\lambda))$.  This defines the linear relation between the times:
\begin{equation} \label{tauv}
\tau^\pm_{k}=\sum_{j,\alpha}  c^{\pm,j}_{\alpha,k}v^\alpha_j.
\end{equation}
The correlation function expansion is $W(\dots, x_j, \dots)\sim \sum_{r=1}^\infty \tr H^r x_j^{-r-1}$ and recalling that
$$
\langle \dots \tr(H^r)\dots \rangle = r\frac {\partial}{\partial t_r}\langle \dots \rangle
$$
we obtain that the expansion of the local correlation
function in the local coordinates has the form
$$
W(\dots, x_j, \dots) = \sum_{r=1}^\infty r x_j^{-r-1}  \frac {\partial}{\partial t_r}.
$$

When $b_\lambda=b_\mu$,
the coefficient expansion of the quadratic differential operator stems from that of the function
$$
E(\eta,\mu)^{\text{global}}-E(\eta,\mu)^{\text{local}}=\log\frac{e^\eta-e^\mu}{\eta-\mu}
=\frac{\eta+\mu}2+\sum_{m=2}^\infty B_m \frac{(\eta-\mu)^m}{m \cdot m!},
$$
where $B_m$ are the Bernoulli numbers (recall that $B_{2s+1}=0$ for $s\ge 1$),
and with the accounting of the skew-symmetrisation under the transformations $\eta\leftrightarrow -\eta$ and $\mu\leftrightarrow -\mu$,
we obtain the expansion
\begin{equation} \label{B++}
-\sum_{m=0}^\infty \frac{B_{2m+2}}{(2m+2)}\sum_{k=0}^m \frac{\eta^{2k+1}}{(2k+1)!} \frac{\mu^{2(m-k)+1}}{(2(m-k)+1)!}
\end{equation}
with only odd powers of $\mu$ and $\eta$, so the corresponding differential operator, after evaluating the residues in $\eta$ and $\mu$, will contain
derivatives only in odd times, as expected. This operator has the following form  (where we recall that in the final expression we have to substitute local times by the global ones):
\begin{equation} \label{A++}
A_{\pm,\pm}=-\frac12 \sum_{m=0}^\infty \frac{B_{2m+2}}{(2m+2)}\sum_{k=0}^m \frac{(2k+1)\partial/\partial \tau^\pm_{k}}{(2k+1)!}
\frac{(2(m-k)+1) \partial/\partial \tau^\pm_{m-k}}{(2(m-k)+1)!}.
\end{equation}

For different branch points, $E(\eta,\mu)^{\text{global}}=\log(e^\eta-e^{\mu-i\pi})=\log(e^\eta+e^\mu)$ and the corresponding expansion is
$$
\log(e^\eta+e^\mu)=\frac{\eta+\mu}{2}+\sum_{m=2}^\infty \frac{B_m}{m}(1-2^{m})\frac{(\eta-\mu)^m}{m!}
$$
so after the skew-symmetrisation and substitution of differential operators in times, we obtain
\begin{equation} \label{A+-}
A_{+,-}=- \sum_{m=0}^\infty \frac{B_{2m+2}}{(2m+2)}(1-2^{2m+2})\sum_{k=0}^m \frac{(2k+1)\partial/\partial \tau^+_{k}}{(2k+1)!}
\frac{(2(m-k)+1) \partial/\partial \tau^-_{m-k}}{(2(m-k)+1)!}.
\end{equation}
The differential operators $A_{\pm,\pm}$ and $A_{+,-}$ exactly coincide with the corresponding operators from \cite{Ch95} where the analogous transformation was first derived in application to the Gaussian model.


\begin{theorem}   \label{Fleg}
Consider the free energy of the Legendre ensemble on the interval $[-2,2]$
with the potential $V(H)=\sum_{k=1}^\infty \frac {{\mathfrak t}_k}{k} H^k$ where the times ${\mathfrak t}_k$ are expressed using the Miwa-type transform
${\mathfrak t}_k=\sum_i (e^{\lambda_i}+e^{-\lambda_i})^{-k}$ as $\lambda\to +\infty$. The free energy of this model has the $1/N$ expansion of the form
$F^{\text{Leg}}=\sum_{g=0}^\infty N^{2-2g} F^{\text{Leg}}_g$ where the term $F^{\text{Leg}}_0$ is a quadratic polynomial in ${\mathfrak t}_k$, and
this term is to be interpreted as the normalisation. We then have the exact relation
\begin{equation} \label{ChG}
e^{F^{\text{Leg}}-N^2 F_0^{\text{Leg}}}=e^{A_{+,+}+A_{-,-}+A_{+,-}} Z^{\text{BGW}}(\tau^+(\lambda)) Z^{\text{BGW}}(\tau^-(\lambda)),
\end{equation}
where $Z^{\text{BGW}}(\tau^\pm(\lambda))$ are partition functions of the (local) BGW model (exponentials of tau-functions of the KdV hierarchy) expressed
in terms of times (\ref{t-global}) and $A_{\cdot,\cdot}$ are quadratic differential operators (\ref{A++}) and (\ref{A+-}).
\end{theorem}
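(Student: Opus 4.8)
The plan is to read off the statement from Theorem~\ref{th:main} applied to the spectral curve \eqref{specur}, and then to rewrite the resulting Givental operator in the global times $\tau^{\pm}_k$ of \eqref{t-global}. First I would record the geometry of \eqref{specur}: $dx=(1-z^{-2})\,dz$ has exactly two simple zeros, lying over $x=\pm 2$, and $y=z/(z^2-1)$ has a simple pole at each of them (its other critical points, $z=\pm i$, are disjoint from the zeros of $dx$). Hence every zero of $dx$ is irregular, so $m=D=2$, and Theorem~\ref{th:main} gives $Z^S=\hat R\,\hat T\,\hat\Delta\,Z^{\text{BGW}}(\hbar,\{v^{k,1}\})\,Z^{\text{BGW}}(\hbar,\{v^{k,2}\})$; the two factors are $Z^{\text{BGW}}$ rather than $Z^{\text{KW}}$ by \eqref{bespart} applied to the two Bessel-type local models $y_\alpha^2x_\alpha=\tfrac14$ of item~(i)(b). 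By Proposition~\ref{th:leg} the left-hand side equals, after substituting $\hbar=N^{-2}$, matching variables through the Miwa transform ${\mathfrak t}_k=\sum_i(e^{\lambda_i}+e^{-\lambda_i})^{-k}$, and subtracting the genus-zero normalisation, exactly $\exp\bigl(F^{\text{Leg}}-N^2F_0^{\text{Leg}}\bigr)$.

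Next I would pass from the basis times $v^{k,\alpha}$ of Definition~\ref{auxdif} to the global times $\tau^{\pm}_k$ via the invertible linear change \eqref{tauv} (invertible since the skew-invariant differentials $d\tau^{\pm}_k$ span the same space as the $V^\alpha_j$), and show that this converts $\hat R\,\hat T\,\hat\Delta$ into $\exp(A_{+,+}+A_{-,-}+A_{+,-})$ acting on suitably normalised local factors. The rescaling $\hat\Delta$ multiplies $\hbar$ by $\eta_\alpha^{-1}$ and $v^{k,\alpha}$ by $\eta_\alpha^{-1/2}$, with $\eta_\alpha=(y\,dx)(\cp_\alpha)^2=\pm\tfrac12$, and is absorbed into the normalisation of $Z^{\text{BGW}}(\tau^{\pm}(\lambda))$ together with $\hbar=N^{-2}$. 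The translation $\hat T$ — nontrivial in the $v^{k,\alpha}$-coordinates, where it is the operator $\hat T_3$ of \eqref{examples} — is absorbed into \eqref{tauv}: by item~(i) the one-form $y\,dx$ equals $d\lambda$ in the global model \emph{and} in each local model, so in an expansion built directly on the global times $\tau^{\pm}_k$ there is no discrepancy of $\omega_{0,1}$ to correct. Likewise the one-derivative (``$v\partial$'') part of $\hat R$, which in the graphical expansion of Theorem~\ref{graphexp} corrects the ordinary leaves, is trivialised by \eqref{tauv}. What remains is a purely quadratic operator encoding the global-versus-local discrepancy of the bidifferential alone.

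That quadratic operator is computed as in item~(ii): write $B^{\text{global}}=\delta_{b_\lambda,b_\mu}B^{\text{local}}$ plus the double contour integral built from $E^{\text{global}}-\delta_{b_\lambda,b_\mu}E^{\text{local}}$. The first summand reproduces $\omega_{0,2}$ inside the two factors $Z^{\text{BGW}}(\tau^{\pm}(\lambda))$; the second, after skew-symmetrising in $\eta\mapsto-\eta$ and $\mu\mapsto-\mu$ and taking residues in $\eta$ and $\mu$, yields a quadratic differential operator in the $\tau$-times whose coefficients come from the Bernoulli expansions of $\log\frac{e^\eta-e^\mu}{\eta-\mu}$ (equal branch points) and $\log(e^\eta+e^\mu)$ (distinct branch points); these are $A_{+,+}+A_{-,-}$ of \eqref{A++} and $A_{+,-}$ of \eqref{A+-}. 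Exponentiating gives \eqref{ChG}. The same conclusion can alternatively be obtained straight from the loop equations of the Legendre ensemble, running the Gaussian-model computation of \cite{Ch95} verbatim — the triple $(\Sigma,B,x)$ being the same — the only change being that the local models are Bessel rather than Airy curves, i.e.\ Proposition~\ref{th:defbes} replaces Proposition~\ref{th:defair}.

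The main obstacle is the middle step: showing that, in the coordinates $\tau^{\pm}_k$ adapted to $y\,dx$, the translation $\hat T$ and the one-derivative part of $\hat R$ genuinely collapse under \eqref{tauv}, rather than merely being conjugated into comparable terms. This is exactly why \eqref{ChG} is cleaner than the general formula \eqref{eq:main}, and it is where the specific choice of global times — forced by the coincidence $y\,dx^{\text{global}}=y\,dx^{\text{local}}=d\lambda$ — is essential; with that in hand, the remaining identification of the quadratic part with $A_{\pm,\pm}$ and $A_{+,-}$ is the residue bookkeeping of item~(ii).
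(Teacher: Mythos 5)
Your proposal is correct and follows essentially the same route as the paper: the paper's own justification of Theorem~\ref{Fleg} is precisely the derivation in the preceding subsection, namely that both zeros of $dx$ are irregular (Bessel-type local models $y_\alpha^2x_\alpha=\tfrac14$), that the coincidence $y\,dx^{\text{global}}=y\,dx^{\text{local}}=d\lambda$ in the global times $\tau^{\pm}_k$ kills the translation and one-derivative parts of the Givental operator, and that the residual quadratic operator comes from the Bernoulli expansions of $\log\frac{e^\eta-e^\mu}{\eta-\mu}$ and $\log(e^\eta+e^\mu)$, yielding $A_{\pm,\pm}$ and $A_{+,-}$. The step you flag as the main obstacle (the collapse of $\hat T$ and the linear part of $\hat R$ under the change of times \eqref{tauv}) is asserted rather than elaborated in the paper as well, so your treatment is at the same level of rigour as the original.
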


\begin{remark}
Expressing the free energy $F^{\text{Leg}}$ in terms of times ${\mathfrak t}_k$ always results in infinite-term expansions for any model even if we restrict ourselves to finite-order monomials corresponding to finite-order correlation functions $W_g(x_1,\dots,x_n$ in the corresponding free theory. It is however well-known (see, e.g., ACNP1) that these expressions are comprised of a finite numbers of terms if we express them in variables $t_k^0:=(e^\lambda-e^{-\lambda})^{-2k-1}$ and $t_k^1:=(e^\lambda+e^{-\lambda})(e^\lambda-e^{-\lambda})^{-2k-1}$, $k=0,1,\dots$.  It is easy to see that all $t_k^{\alpha}$, $\alpha=0,1$, are finite linear combinations of $\tau^\pm_k$.
\end{remark}
We demonstrate Theorem~\ref{Fleg} by calculating low genus examples.  Unlike the Gaussian case, we see that the free energy $F_g^{\text{Leg}}$ is in some sense finite.  It is a finite sum of rational functions apparent in the exact formulae below.  
\begin{example}\label{F1}
In genus 1, the only contributing graphs have no edges.
$$
\begin{pspicture}(-8,-1)(8,1)
{\psset{unit=0.7}
\newcommand{\PATTERN}[1]{%
{\psset{unit=1}
\pscircle[fillstyle=solid,fillcolor=yellow](0,0){1}
\pscircle[fillstyle=solid,fillcolor=white,linecolor=white](0,0){0.4}
\rput(0,0){\makebox(0,0)[cc]{\small #1}}
}
}
\rput(-10,0){\makebox(0,0)[rc]{$F_1^{\text{Leg}}=\sum\limits_{\pm}$}}
\rput(-8.5,0){\PATTERN{$1_\pm$}}

\rput(-2,0){$=\frac{1}{8}(1-\log(1-\tau_0^+))+\frac{1}{8}(1-\log(1-\tau_0^-))$}
}
\end{pspicture}
$$%
\end{example}

\ifdefined\pictures
\begin{example}\label{F2}
We now consider the decomposition for the $F_2$ term of the free energy.

$$
\begin{pspicture}(-8,-1)(8,1)
{\psset{unit=0.7}
\newcommand{\PATTERN}[1]{%
{\psset{unit=1}
\pscircle[fillstyle=solid,fillcolor=yellow](0,0){1}
\pscircle[fillstyle=solid,fillcolor=white,linecolor=white](0,0){0.4}
\rput(0,0){\makebox(0,0)[cc]{\small #1}}
}

}
\rput(-10,0){\makebox(0,0)[rc]{$F_2^{\text{Leg}}=\sum\limits_{\pm}$}}
\rput(-8.5,0){\PATTERN{$2_\pm$}}
\rput(-7,0){\makebox(0,0)[lc]{$+\sum\limits_{\pm}$}}
\rput(-3,0){
\rput(-1.7,0){\PATTERN{$1_\pm$}}
\rput(1.7,0){\PATTERN{$1_\pm$}}
\psline[linewidth=2pt,linecolor=blue,linestyle=dashed](-1,0)(1,0)
\rput(0,.2){\makebox(0,0)[cb]{\small $A_{\pm,\pm}^{0,0}$}}
}
\rput(0,0){\makebox(0,0)[cc]{$+$}}
\rput(3,0){
\rput(-1.7,0){\PATTERN{$1_+$}}
\rput(1.7,0){\PATTERN{$1_-$}}
\psline[linewidth=2pt,linecolor=blue,linestyle=dashed](-1,0)(1,0)
\rput(0,.2){\makebox(0,0)[cb]{\small $A_{+,-}^{0,0}$}}
}
\rput(5.7,0){\makebox(0,0)[lc]{$+\sum\limits_{\pm}$}}
\rput(8,0){\PATTERN{$1_\pm$}}
\rput(9,0){\psarc[linewidth=2pt,linecolor=blue,linestyle=dashed](0,0){0.9}{-140}{140}}
\rput(10,0){\makebox(0,0)[lc]{\small $A_{\pm,\pm}^{0,0}$}}
}
\end{pspicture}
$$
Here
$$
\begin{pspicture}(-8,-1)(8,1)
{\psset{unit=0.7}
\newcommand{\PATTERN}[1]{%
{\psset{unit=1}
\pscircle[fillstyle=solid,fillcolor=yellow](0,0){1}
\pscircle[fillstyle=solid,fillcolor=white,linecolor=white](0,0){0.4}
\rput(0,0){\makebox(0,0)[cc]{\small #1}}
}
}
\rput(-10.5,0){\PATTERN{$2_\pm$}}
\rput(-9.3,0){\makebox(0,0)[lc]{\small $=\dfrac {3\tau_1^\pm}{256(1-\tau_0^\pm)^3}$,}}
\rput(-2,0){
\rput(-1.7,0){\PATTERN{$1_\pm$}}
\psline[linewidth=2pt,linecolor=blue,linestyle=dashed](-1,0)(-0.2,0)
\rput(0.2,0){\makebox(0,0)[lc]{\small $=\dfrac{\partial F_1^{\text{Leg}}}{\partial \tau_0^\pm}=\dfrac {1}{8(1-\tau_0^\pm)}$,}}
}
\rput(6,0){
\rput(-1.7,0){\PATTERN{$1_\pm$}}
\psline[linewidth=2pt,linecolor=blue,linestyle=dashed](-1.2,0.5)(-0.7,1)
\psline[linewidth=2pt,linecolor=blue,linestyle=dashed](-1.2,-0.5)(-0.7,-1)
\rput(-0.2,0){\makebox(0,0)[lc]{\small $=\dfrac{\partial^2 F_1^{\text{Leg}}}{\partial (\tau_0^\pm)^2}=\dfrac {1}{8(1-\tau_0^\pm)^2}$,}}
}
}
\end{pspicture}
$$

$$
A_{\pm,\pm}^{k,l}=-\frac 12\cdot\frac{B_{2(k+l+1)}}{2(k+l+1)(2k)!(2l)!},\qquad
A_{+,-}^{k,l}=- \frac{B_{2(k+l+1)}}{2(k+l+1)(2k)!(2l)!}(2^{2(k+l+1)}-1).
$$
Recalling that $B_2=1/6$, we have that $A_{\pm,\pm}^{0,0}=-1/24$ and $A_{+,-}^{0,0}=-1/4$, so the combination of the second and fourth terms in the graphical expansion for $F_2^{\text{Leg}}$ yield
$$
\Bigl(-\frac{1}{8^2\cdot 24}-\frac 1{8\cdot 24}\Bigr)\frac 1{(1-\tau_0^\pm)^2}= -\frac{3}{512} \frac 1{(1-\tau_0^\pm)^2}
$$
and the third term  of the same graphical expansion yields
$$
-\frac{1}{8^2\cdot 4}\frac{1}{(1-\tau_0^+)(1-\tau_0^-)}=-\frac{1}{256} \frac{1}{(1-\tau_0^+)(1-\tau_0^-)}.
$$
Hence
$$
F_2^{\text{Leg}}=\frac {3\tau_1^\pm}{256(1-\tau_0^\pm)^3} -\frac{3}{512} \frac 1{(1-\tau_0^\pm)^2}-\frac{1}{256} \frac{1}{(1-\tau_0^+)(1-\tau_0^-)}.
$$

\end{example}

\begin{example}\label{F3}
We now consider the decomposition for $F_3$ term of the free energy.
$$
\begin{pspicture}(-8,-1)(8,1)
{\psset{unit=0.7}
\newcommand{\PATTERN}[1]{%
{\psset{unit=1}
\pscircle[fillstyle=solid,fillcolor=yellow](0,0){1}
\pscircle[fillstyle=solid,fillcolor=white,linecolor=white](0,0){0.4}
\rput(0,0){\makebox(0,0)[cc]{\small #1}}
}
}
\rput(-10,0){\makebox(0,0)[rc]{$F_3^{\text{Leg}}=\sum\limits_{\pm}$}}
\rput(-8.5,0){\PATTERN{$3_\pm$}}
\rput(-7.5,0){\makebox(0,0)[lc]{$+\sum\limits_{\pm}{\tcb{2}}$}}
\rput(-3.2,0){
\rput(-1.7,0){\PATTERN{$2_\pm$}}
\rput(1.7,0){\PATTERN{$1_\pm$}}
\psline[linewidth=2pt,linecolor=blue,linestyle=dashed](-1,0)(1,0)
\rput(0,.2){\makebox(0,0)[cb]{\small $A_{\pm,\pm}^{0,0}$}}
}
\rput(0,0){\makebox(0,0)[cc]{$+{\tcb{1}}$}}
\rput(3.2,0){
\rput(-1.7,0){\PATTERN{$2_\pm$}}
\rput(1.7,0){\PATTERN{$1_\mp$}}
\psline[linewidth=2pt,linecolor=blue,linestyle=dashed](-1,0)(1,0)
\rput(0,.2){\makebox(0,0)[cb]{\small $A_{+,-}^{0,0}$}}
}
\rput(6,0){\makebox(0,0)[lc]{$+\sum\limits_{\pm}{\tcb{1}}$}}
\rput(8.5,0){\PATTERN{$2_\pm$}}
\rput(9.5,0){\psarc[linewidth=2pt,linecolor=blue,linestyle=dashed](0,0){0.9}{-140}{140}}
\rput(10.5,0){\makebox(0,0)[lc]{\small $A_{\pm,\pm}^{0,0}$}}
}
\end{pspicture}
$$
$$
\begin{pspicture}(-8,-1)(8,1)
{\psset{unit=0.7}
\newcommand{\PATTERN}[1]{%
{\psset{unit=1}
\pscircle[fillstyle=solid,fillcolor=yellow](0,0){1}
\pscircle[fillstyle=solid,fillcolor=white,linecolor=white](0,0){0.4}
\rput(0,0){\makebox(0,0)[cc]{\small #1}}
}
}
\rput(-10,0){\makebox(0,0)[lc]{$+\sum\limits_{\pm}{\tcb{2}\times\tcr{2}}$}}
\rput(-5,0){
\rput(-1.7,0){\PATTERN{$2_\pm$}}
\rput(1.7,0){\PATTERN{$1_\pm$}}
\rput(-1.1,0){\makebox(0,0)[rc]{\small $1$}}
\psline[linewidth=2pt,linecolor=red,linestyle=dashed](-1,0)(1,0)
\rput(0,.2){\makebox(0,0)[cb]{\small $A_{\pm,\pm}^{1,0}$}}
}
\rput(-1.5,0){\makebox(0,0)[cc]{$+{\tcb{1}\times\tcr{2}}$}}
\rput(2.3,0){
\rput(-1.7,0){\PATTERN{$2_\pm$}}
\rput(1.7,0){\PATTERN{$1_\mp$}}
\rput(-1.1,0){\makebox(0,0)[rc]{\small $1$}}
\psline[linewidth=2pt,linecolor=red,linestyle=dashed](-1,0)(1,0)
\rput(0,.2){\makebox(0,0)[cb]{\small $A_{\pm,\mp}^{1,0}$}}
}
\rput(5,0){\makebox(0,0)[lc]{$+\sum\limits_{\pm}{\tcb{2}\times\tcr{2}}$}}
\rput(8.3,0){\PATTERN{$2_\pm$}}
\rput(8.5,0.6){\makebox(0,0)[rc]{\small $1$}}
\rput(9.3,0){\psarc[linewidth=2pt,linecolor=red,linestyle=dashed](0,0){0.9}{-140}{140}}
\rput(10.3,0){\makebox(0,0)[lc]{\small $A_{\pm,\pm}^{1,0}$}}
}
\end{pspicture}
$$
$$
\begin{pspicture}(-8,-1)(8,1)
{\psset{unit=0.7}
\newcommand{\PATTERN}[1]{%
{\psset{unit=1}
\pscircle[fillstyle=solid,fillcolor=yellow](0,0){1}
\pscircle[fillstyle=solid,fillcolor=white,linecolor=white](0,0){0.4}
\rput(0,0){\makebox(0,0)[cc]{\small #1}}
}
}
\rput(-11.5,0){\makebox(0,0)[rc]{$+\sum\limits_{\pm}{\tcb{1/2}}$}}
\rput(-9.5,0){\PATTERN{$1_\pm$}}
\rput(-8.5,0){\psarc[linewidth=2pt,linecolor=blue,linestyle=dashed](0,0){0.9}{-140}{140}}
\rput(-10.5,0){\psarc[linewidth=2pt,linecolor=blue,linestyle=dashed](0,0){0.9}{40}{320}}
\rput(-10.5,1){\makebox(0,0)[rb]{\small $A_{\pm,\pm}^{0,0}$}}
\rput(-8.5,1){\makebox(0,0)[lb]{\small $A_{\pm,\pm}^{0,0}$}}
\rput(-7,0){\makebox(0,0)[lc]{$+\sum\limits_{\pm}{\tcb{2}}$}}
\rput(-2,0){
\rput(-1.7,0){\PATTERN{$1_\pm$}}
\rput(-2.7,0){\psarc[linewidth=2pt,linecolor=blue,linestyle=dashed](0,0){0.9}{40}{320}}
\rput(-2.7,1){\makebox(0,0)[rb]{\small $A_{\pm,\pm}^{0,0}$}}
\rput(1.7,0){\PATTERN{$1_\pm$}}
\psline[linewidth=2pt,linecolor=blue,linestyle=dashed](-1,0)(1,0)
\rput(0,.2){\makebox(0,0)[cb]{\small $A_{\pm,\pm}^{0,0}$}}
}
\rput(6,0){
\rput(-1.7,0){\PATTERN{$1_\pm$}}
\rput(-2.7,0){\psarc[linewidth=2pt,linecolor=blue,linestyle=dashed](0,0){0.9}{40}{320}}
\rput(-2.7,1){\makebox(0,0)[rb]{\small $A_{\pm,\pm}^{0,0}$}}
\rput(1.7,0){\PATTERN{$1_\mp$}}
\psline[linewidth=2pt,linecolor=blue,linestyle=dashed](-1,0)(1,0)
\rput(0,.2){\makebox(0,0)[cb]{\small $A_{\pm,\mp}^{0,0}$}}
}
\rput(1.4,0){\makebox(0,0)[cc]{$+\sum\limits_{\pm}{\tcb{1}}$}}
%
%
%
%
}
\end{pspicture}
$$
$$
\begin{pspicture}(-8,-1)(8,1)
{\psset{unit=0.7}
\newcommand{\PATTERN}[1]{%
{\psset{unit=1}
\pscircle[fillstyle=solid,fillcolor=yellow](0,0){1}
\pscircle[fillstyle=solid,fillcolor=white,linecolor=white](0,0){0.4}
\rput(0,0){\makebox(0,0)[cc]{\small #1}}
}
}
\rput(-12,0){\makebox(0,0)[rc]{$+\sum\limits_{\pm}{\tcb{1}}$}}
\rput(-9,0){
\rput(-1.7,0){\PATTERN{$1_\pm$}}
\rput(1.7,0){\PATTERN{$1_\pm$}}
\psline[linewidth=2pt,linecolor=blue,linestyle=dashed](-1,0.3)(1,0.3)
\psline[linewidth=2pt,linecolor=blue,linestyle=dashed](-1,-0.3)(1,-0.3)
\rput(0,.5){\makebox(0,0)[cb]{\small $A_{\pm,\pm}^{0,0}$}}
\rput(0,-.5){\makebox(0,0)[ct]{\small $A_{\pm,\pm}^{0,0}$}}
}
\rput(-6.3,0){\makebox(0,0)[lc]{$+{\tcb{1/2}}$}}
\rput(-2.2,0){
\rput(-1.7,0){\PATTERN{$1_+$}}
\rput(1.7,0){\PATTERN{$1_-$}}
\psline[linewidth=2pt,linecolor=blue,linestyle=dashed](-1,0.3)(1,0.3)
\psline[linewidth=2pt,linecolor=blue,linestyle=dashed](-1,-0.3)(1,-0.3)
\rput(0,.5){\makebox(0,0)[cb]{\small $A_{+,-}^{0,0}$}}
\rput(0,-.5){\makebox(0,0)[ct]{\small $A_{+,-}^{0,0}$}}
}
\rput(1,0){\makebox(0,0)[cc]{$+\sum\limits_{\pm}{\tcb{2}}$}}
\rput(6.1,0){
\rput(-3.4,0){\PATTERN{$1_\pm$}}
\rput(0,0){\PATTERN{$1_\pm$}}
\rput(3.4,0){\PATTERN{$1_\pm$}}
\psline[linewidth=2pt,linecolor=blue,linestyle=dashed](-2.7,0)(-0.7,0)
\psline[linewidth=2pt,linecolor=blue,linestyle=dashed](2.7,0)(0.7,0)
\rput(-1.7,.2){\makebox(0,0)[cb]{\small $A_{\pm,\pm}^{0,0}$}}
\rput(1.7,.2){\makebox(0,0)[cb]{\small $A_{\pm,\pm}^{0,0}$}}
}
%
%
%
%
}
\end{pspicture}
$$
$$
\begin{pspicture}(-8,-1)(8,1)
{\psset{unit=0.7}
\newcommand{\PATTERN}[1]{%
{\psset{unit=1}
\pscircle[fillstyle=solid,fillcolor=yellow](0,0){1}
\pscircle[fillstyle=solid,fillcolor=white,linecolor=white](0,0){0.4}
\rput(0,0){\makebox(0,0)[cc]{\small #1}}
}
}
\rput(-10.5,0){\makebox(0,0)[rc]{$+\sum\limits_{\pm}{\tcb{2}}$}}
\rput(-5.5,0){
\rput(-3.4,0){\PATTERN{$1_\pm$}}
\rput(0,0){\PATTERN{$1_\pm$}}
\rput(3.4,0){\PATTERN{$1_\mp$}}
\psline[linewidth=2pt,linecolor=blue,linestyle=dashed](-2.7,0)(-0.7,0)
\psline[linewidth=2pt,linecolor=blue,linestyle=dashed](2.7,0)(0.7,0)
\rput(-1.7,.2){\makebox(0,0)[cb]{\small $A_{\pm,\pm}^{0,0}$}}
\rput(1.7,.2){\makebox(0,0)[cb]{\small $A_{\pm,\mp}^{0,0}$}}
}
\rput(-.2,0){\makebox(0,0)[cc]{$+\sum\limits_{\pm}{\tcb{1/2}}$}}
\rput(5.5,0){
\rput(-3.4,0){\PATTERN{$1_\pm$}}
\rput(0,0){\PATTERN{$1_\mp$}}
\rput(3.4,0){\PATTERN{$1_\pm$}}
\psline[linewidth=2pt,linecolor=blue,linestyle=dashed](-2.7,0)(-0.7,0)
\psline[linewidth=2pt,linecolor=blue,linestyle=dashed](2.7,0)(0.7,0)
\rput(-1.7,.2){\makebox(0,0)[cb]{\small $A_{\pm,\mp}^{0,0}$}}
\rput(1.7,.2){\makebox(0,0)[cb]{\small $A_{\mp,\pm}^{0,0}$}}
}
%
%
%
%
}
\end{pspicture}
$$
Here
$$
\begin{pspicture}(-8,-1)(8,1)
{\psset{unit=0.7}
\newcommand{\PATTERN}[1]{%
{\psset{unit=1}
\pscircle[fillstyle=solid,fillcolor=yellow](0,0){1}
\pscircle[fillstyle=solid,fillcolor=white,linecolor=white](0,0){0.4}
\rput(0,0){\makebox(0,0)[cc]{\small #1}}
}
}
\rput(-10.5,0){\PATTERN{$2_\pm$}}
\psline[linewidth=2pt,linecolor=red,linestyle=dashed](-9.8,0)(-9,0)
\rput(-9.9,0){\makebox(0,0)[rc]{\small $1$}}
\rput(-8.8,0){\makebox(0,0)[lc]{\small $=\dfrac {3}{256(1-\tau_0^\pm)^3}$,}}
\rput(6.5,0){
\rput(-10.5,0){\PATTERN{$2_\pm$}}
\psline[linewidth=2pt,linecolor=red,linestyle=dashed](-9.8,0.3)(-9,0.3)
\rput(-9.9,0.3){\makebox(0,0)[rc]{\small $1$}}
\psline[linewidth=2pt,linecolor=red,linestyle=dashed](-9.8,-0.3)(-9,-0.3)
\rput(-8.8,0){\makebox(0,0)[lc]{\small $=\dfrac {3\cdot 3}{256(1-\tau_0^\pm)^4}$,}}
}
\rput(13,0){
\rput(-10.5,0){\PATTERN{$2_\pm$}}
\psline[linewidth=2pt,linecolor=blue,linestyle=dashed](-9.8,0.3)(-9,0.3)
\psline[linewidth=2pt,linecolor=blue,linestyle=dashed](-9.8,-0.3)(-9,-0.3)
\rput(-8.8,0){\makebox(0,0)[lc]{\small $=\dfrac {3\cdot 3\cdot 4 \tau_1^\pm}{256(1-\tau_0^\pm)^5}$,}}
}
%
%
}
\end{pspicture}
$$
and
$$
\begin{pspicture}(-8,-1)(8,1)
{\psset{unit=0.7}
\newcommand{\PATTERN}[1]{%
{\psset{unit=1}
\pscircle[fillstyle=solid,fillcolor=yellow](0,0){1}
\pscircle[fillstyle=solid,fillcolor=white,linecolor=white](0,0){0.4}
\rput(0,0){\makebox(0,0)[cc]{\small #1}}
}
}
\rput(0,0){
\rput(-1.7,0){\PATTERN{$1_\pm$}}
\psline[linewidth=2pt,linecolor=blue,linestyle=dashed](-1.2,0.5)(-0.7,1)
\psline[linewidth=2pt,linecolor=blue,linestyle=dashed](-1.2,-0.5)(-0.7,-1)
\rput(-1.7,0){\psarc[linewidth=3pt,linecolor=blue,linestyle=dotted](0,0){1.3}{-40}{40}}
\rput(-0.2,0){\makebox(0,0)[lc]{$k$}}
\rput(0.2,0){\makebox(0,0)[lc]{\small $=\dfrac{\partial^k F_1^{\text{Leg}}}{\partial (\tau_0^\pm)^k}=\dfrac {(k-1)!}{8(1-\tau_0^\pm)^k}$,}}
}
}
\end{pspicture}
$$
$$
A_{\pm,\pm}^{k,l}=-\frac 12\cdot\frac{B_{2(k+l+1)}}{2(k+l+1)(2k)!(2l)!},\qquad
A_{+,-}^{k,l}=- \frac{B_{2(k+l+1)}}{2(k+l+1)(2k)!(2l)!}(2^{2(k+l+1)}-1).
$$
$F_3^{\text{BGW}}=C_1 \tau_2/(1-\tau_0)^6+C_2 (\tau_1)^2/(1-\tau_0)^7$. Here $\tau_2^\pm$ is obtained by a Laplace transform from a ``pure'' state corresponding to the monomial $b^4(\mp 1)^b$.
Recalling that $B_2=1/6$ and $B_4=-1/30$, we have that $A_{\pm,\pm}^{0,0}=-1/24$, $A_{+,-}^{0,0}=-1/4$, $A_{\pm,\pm}^{1,0}=1/(30\cdot 16)$, $A_{\pm,\mp}^{1,0}=1/16$.
so we fix everything indicating in gray the symmetry factors of diagrams. All derivatives in the times $\tau_1^\pm$ are to be multiplied by factors of two in order to obtain $F_3$.

The terms in the first line are
$$
F_3^{\text{BGW}}(\tau_+)+F_3^{\text{BGW}}(\tau_-)-\frac{2\cdot3\cdot 3\cdot \tau_1^\pm}{24\cdot 256\cdot 8 (1-\tau_0^\pm)^5}
-\frac{3\cdot 3\cdot \tau_1^\pm}{4\cdot 256\cdot 8 (1-\tau_0^\pm)^4(1-\tau_0^\mp)} -\frac{3\cdot 3\cdot 4\cdot \tau_1^\pm}{24\cdot 256(1-\tau_0^\pm)^5}
$$
and all other terms depend only on zeroth times and are a linear combination of $(1-\tau_0^+)^{-4}+(1-\tau_0^-)^{-4}$, $(1-\tau_0^+)^{-3}(1-\tau_0^-)^{-1}
+(1-\tau_0^-)^{-3}(1-\tau_0^+)^{-1}$, and $(1-\tau_0^+)^{-2}(1-\tau_0^-)^{-2}$ with necessarily positive coefficients.
\end{example}
\fi

\setcounter{section}{0}
\appendix{Quantisation}

In this section we give a brief background for the construction of $\hat{R}$ from $R$ following Givental \cite{GivGro}.  We first consider quantisation in finite dimensions which easily generalises  to infinite dimensions.

Consider the standard holomorphic form on $\bc^{2N}=T^*\bc^N$ given by $\omega=\sum dp_\alpha\wedge dq_\alpha$ with Darboux coordinates $\{q_\alpha,p_\alpha\}$.  A transformation $A:\bc^{2N}\to\bc^{2N}$ that preserves the symplectic form is {\em symplectic}.  We will consider only linear symplectic transformations which correspond to matrices $A\in\text{Sp}(2N,\bc)$.  So-called {\em infinitesimal} symplectic transformations, corresponding to elements of the Lie algebra $\text{sp}(2N,\bc)$, give rise to vector fields that preserve $\omega$, known as Hamiltonian vector fields.  The $2N^2+N$-dimensional space $\text{sp}(2N,\bc)$ is isomorphic to the $2N^2+N$-dimensional vector space of Hamiltonians spanned by
$$ p_\alpha p_\beta,\quad p_\alpha q_\beta,\quad q_\alpha q_\beta.
$$
$H(p,q)=p_1q_2$ is an example of a Hamiltonian.  Quantisation of these coordinates, i.e. promotion to operators $p_\alpha\mapsto\hat{p}_\alpha$ and $q_\alpha\mapsto\hat{q}_\alpha$, gives an algebra defined by
$$ [\hat{p}_\alpha,\hat{q}_\beta]=\delta_{\alpha\beta}\hbar,\quad[\hat{p}_\alpha,\hat{p}_\beta]=0=[\hat{q}_\alpha,\hat{q}_\beta]
$$
so we naturally choose $\hat{p}_\alpha=\hbar\frac{\partial}{\partial q_\alpha}$ and $\hat{q}_\alpha$ acts by multiplication by $q_\alpha$.  Quantisation of a function, or observable, in $\{p_\alpha,q_\beta\}$ is not unique.  We can consistently define quantisation of the quadratic Hamiltonians by
$$ \widehat{p_\alpha p_\beta}=\hbar^2\frac{\partial^2}{\partial q_\alpha\partial q_\beta},\quad \widehat{p_\alpha q_\beta}=\hbar q_\beta\frac{\partial}{\partial q_\alpha},\quad \widehat{q_\alpha q_\beta}=q_\alpha q_\beta.$$
Linear combinations of these give the quantisation of infinitesimal symplectic transformations.  Hence we can define the quantisation of a linear symplectic transformation $A=\exp{a}$ for $a\in\text{sp}(2N,\bc)$ to be $\hat{A}=\exp{\hat{a}}$.

This construction generalises to the infinite dimensional symplectic manifold $(\ch,\Omega)=(H[z^{-1}][[z]],\Omega)$ for $H\cong\bc^D$, where the symplectic form $ \Omega$ is defined by
$$ \Omega( f(z),g(z))=\res_{z=0}f(-z)g(z)dz.
$$
$\ch_+=H[[z]]$ is Lagrangian with respect to $\Omega$ and $(\ch,\Omega)\cong (T^*\ch_+,\omega^{\text{canonical}})$.  Darboux coordinates for $\Omega$ are $q_{k,\alpha},p_{k,\alpha}$ defined by $\displaystyle\ch\ni f(z)=\sum_{k\geq 0} q_{k,\alpha}z^{k}+\sum_{k< 0} p_{k,\alpha}z^{-k}$.

Given a dimension $D$ vector space $V$ and a sequence of operators $r_m:V\to V$, $m=1,2,...$ such that $r_m(-v)=(-1)^{m+1}r_m(v)$ define
$$\widehat{r_mz^m}:=\sum_{k=0}^{\infty}v^{k,\alpha}(r_m)^\beta_\alpha\frac{\partial}{\partial v^{m+k,\beta}}+\frac{\hbar}{2}\sum_{i=0}^{m-1}(-1)^{i+1}(r_m)^{\alpha,\beta}\frac{\partial^2}{\partial v^{i,\alpha}\partial v^{m-i-1,\beta}}.
$$
Then the quantisation of $R(z)=\exp r(z)$ is given by $\hat{R}(z)=\exp\hat{r}(z)$.


\begin{thebibliography}{99}

\bibitem{AMMBGW} Alexandrov, A; Moronov, A and Morozov, A.
\emph{BGWM as second constitutent of complex matrix model.}
J. High Energy Phys.  {\bf 2009} (12), (2009) 053.


\bibitem{ABCO}
Andersen, J., Borot, G., Chekhov, L., and Orantin, N.
{\it ABCD of topological recursion}, (2017) 
\href{http://arxiv.org/abs/1703.03307}{arXiv:1703.03307}

\bibitem{ACNP}
Andersen, J., Chekhov, L., Norbury, P., and Penner, R.
\emph{Models of discretized moduli spaces, cohomological field theories, and Gaussian means.}  J. Geom. Phys. {\bf98} (2015) 312--339.

\bibitem{BGuAsy}
Borot, G. and Guionnet, A.
\emph{Asymptotic Expansion of $\beta$ Matrix Models in the One-cut Regime.}
Commun. Math. Phys. {\bf 317} (2013) 447-483.

\bibitem{BGrExt} Br\'{e}zin, E and Gross, D.
\emph{The external field problem in the large $N$ limit of QCD.}
 Phys. Lett. B97 (1980) 120.

\bibitem{Ch95} Chekhov, L. {\it Matrix models tools and geometry of moduli spaces},
{\sl Acta Appl. Mathematicae} {\bf 48} (1997) 33--90. 

\bibitem{CEyHer} Chekhov, Leonid and Eynard, Bertrand.
\emph{Hermitian matrix model free energy: {F}eynman graph technique for all genera.}
J. High Energy Phys. {\bf 2006} (3), (2006), 014.

\bibitem{CEO} Chekhov, L., Eynard, B., and Orantin, N. \emph{Free energy topological expansion for the 2-matrix model.} J. High-Energy Phys.
(2006) {\bf12} 053.

\bibitem{CMMV} Chekhov, L., Marshakov, A., Mironov, A., and Vasiliev, D. \emph{Complex geometry of matrix models}, Proc. Steklov Math. Inst. {\bf 251} (2005) 254--292.

\bibitem{DNoTop}
Do, N. and Norbury, P.
\emph{Topological recursion for irregular spectral curves.}
{\em Journal of the London Mathematical Society} {\bf 97} (2018), 398-426.

\bibitem{DNoTop1}
Do, N. and Norbury, P.
\emph{Topological recursion on the Bessel curve.}
Comm. Number Theory and Physics {\bf 12} (2018), 53-73.


\bibitem{DNOPSDub}
Dunin-Barkowski, P; Norbury, P; Orantin, N; Popolitov, A; Shadrin, S.
\emph{Dubrovin's superpotential as a global spectral curve.}
To appear in J. Inst. Math. Jussieu. \href{http://arxiv.org/abs/1509.06954}{arXiv:1509.06954}

\bibitem{DOSSIde} Dunin-Barkowski, P.; Orantin, N.; Shadrin, S. and Spitz, L.
\emph{Identification of the Givental formula with the spectral curve topological recursion procedure.}
Comm. Math. Phys. {\bf 328}, (2014), 669--700.

\bibitem{DSSGiv} Dunin-Barkowski, P; Shadrin, S and Spitz, L.
\emph{Givental graphs and inversion symmetry.}
Lett. Math. Phys. {\bf 103} (2013), 533--557.

\bibitem{EynInv} Eynard, B.
\emph{Invariants of spectral curves and intersection theory of moduli spaces of complex curves.}
Commun. Number Theory Phys. {\bf 8} (2014), 541--588.

\bibitem{EynRec} Eynard, B.
\emph{Recursion between Mumford volumes of moduli spaces.}
Annales Henri Poincar\'e {\bf 12} (2011), 1431--1447.

\bibitem{EynTop} Eynard, B.
\emph{Topological expansion for the 1-hermitian matrix model correlation functions.}
 J. High Energy Phys. 2004,  {\bf 11} (2005), 31.

\bibitem{EOrInv} Eynard, B. and Orantin, N.
\emph{Invariants of algebraic curves and topological expansion.}
Commun. Number Theory Phys. {\bf 1} (2), (2007), 347--452.

\bibitem{EOrTop} Eynard, B and Orantin, N.
\emph{Topological recursion in enumerative geometry and random matrices.}
J. Phys. A {\bf 42}, no. 29, 293001, 117 pp., (2009).

\bibitem{EOrWei} Eynard, B. and Orantin, N.
\emph{Weil-Petersson volume of moduli spaces, Mirzakhani's recursion and matrix models.}
arXiv:0705.3600.


\bibitem{GivGro} Givental, A.
\emph{Gromov-Witten invariants and quantization of quadratic hamiltonians.}Moscow Math. Journal {\bf 4}, (2001), 551--568.

\bibitem{GivSem} Givental, A.
\emph{Semisimple Frobenius structures at higher genus.}
Int. Math. Res. Not. (2001), 1265-1286.

\bibitem{GWiPos} Gross, D. and Witten, E.
\emph{Possible third-order phase transition in the large-$N$ lattice gauge theory.}
Phys. Rev. D21 (1980) 446.

\bibitem{KonInt} Kontsevich, M.
\emph{Intersection theory on the moduli space of curves and the matrix {A}iry function.}
Comm. Math. Phys. {\bf 147}, (1992), 1--23.

\bibitem{KSoAir} Kontsevich, M.  and Soibelman, Y.
\emph{Airy structures and symplectic geometry of topological recursion.}
\href{http://arxiv.org/abs/1701.09137}{arXiv:1701.09137}

\bibitem{Kr} Krichever, I. 
\emph{The $\tau$-function of the universal Whitham hierarchy, matrix models and topological field theories.}
Commun. Pure Appl. Math. {\bf 47} (1992) 437--457.

\bibitem{MZoInv} Manin, Y. and Zograf, P.
\emph{Invertible cohomological field theories and Weil-Petersson volumes.}
Ann. Inst. Fourier (Grenoble) {\bf 50} (2000), 519--535.

\bibitem{MMS} Mironov, A., Morozov, A., and Semenoff, G. 
\emph{Unitary matrix integrals in the framework of Generalized Kontsevich Model. I. Brezin--Gross--Witten model.} Intl. J. Mod. Phys. {\bf A10} (1995) 2015--2050.

\bibitem{MirWei}
Mirzakhani, M.
\emph{Weil-Petersson volumes and intersection theory on the moduli space of curves.}
J. Amer. Math. Soc., {\bf 20} (2007), 1--23.

\bibitem{NScGro}
Norbury, P. and Scott, N.
\emph{Gromov--Witten invariants of $\mathbf{P}^1$ and Eynard--Orantin invariants.}
Geometry \& Topology  {\bf 18} (2014), 1865--1910.

\bibitem{PPZRel} Pandharipande, R; Pixton, A. and Zvonkine, D.
\emph{Relations on $\overline{\modm}_{g,n}$ via 3-spin structures.}
J. Amer. Math. Soc. {\bf 28} (2015), no. 1, 279--309.

\bibitem{WitTwo} Witten, E.
\emph{Two-dimensional gravity and intersection theory on moduli space.} Surveys in differential geometry (Cambridge, MA, 1990), 243--310, Lehigh Univ., Bethlehem, PA, 1991.


\end{thebibliography}
\end{document}